\newtheorem{thm}{Theorem}[section]
\theoremstyle{definition}
\theoremstyle{definition}
\newtheorem{rem}[thm]{Remark}
\theoremstyle{definition}
\def\C{\mathbb C}
\def\dim{\operatorname{dim}}
\def\codim{\operatorname{codim}}
\def\sign{\operatorname{sign}}
\def\id{\operatorname{id}}
\def\sign{\operatorname{sign}}
\newcommand{\RR}{\mathbb{R}}
\newcommand{\CC}{\mathbb{C}}
\newcommand{\QQ}{\mathbb{Q}}
\newcommand{\eqA}{\mathscr{A}}
\newcommand{\OO}{\mathcal{O}}
\newcommand{\CCzero}[1]{(\mathbb{C}^{#1},0)}
\newcommand{\CCS}[1]{(\mathbb{C}^{#1},S)}
\newcommand{\GS}[2]{(\mathbb{C}^{#1},S)\rightarrow(\mathbb{C}^{#2},0)}
\newcommand{\Alt}{\textnormal{Alt}}
\theoremstyle{plain}
\newtheorem{theorem}{Theorem}[section]
\newtheorem{lemma}[theorem]{Lemma}
\newtheorem{corollary}[theorem]{Corollary}
\newtheorem{proposition}[theorem]{Proposition}
\theoremstyle{definition}
\newtheorem{definition}[theorem]{Definition}
\newtheorem{conjecture}[theorem]{Conjecture}
\newtheorem{example}[theorem]{Example}
\theoremstyle{remark}
\newtheorem*{note}{Note}
\begin{document}

\author{R. Giménez Conejero and
J.J.~Nu\~no-Ballesteros}

\title[The image Milnor number and excellent unfoldings]
{The image Milnor number and excellent unfoldings}

\address{Departament de Matemàtiques,
Universitat de Val\`encia, Campus de Burjassot, 46100 Burjassot
SPAIN}
\email{Roberto.Gimenez@uv.es}
\email{Juan.Nuno@uv.es}

\thanks{The first named author has been partially supported by MCIU Grant FPU16/03844. The second named author has been partially supported by MICINN Grant PGC2018--094889--B--I00 and by GVA Grant AICO/2019/024}

\subjclass[2000]{Primary 58K15; Secondary 32S30, 58K40} \keywords{Image Milnor number, Mond's conjecture, excellent unfoldings}

\begin{abstract} 
We show three basic properties on the image Milnor number $\mu_I(f)$ of a germ $f\colon\CCS{n}\rightarrow\CCzero{n+1}$ with isolated instability. First, we show the conservation of the image Milnor number, from which one can deduce the upper semi-continuity and the topological invariance for families. Second, we prove the weak Mond's conjecture, which says that $\mu_I(f)=0$ if and only if $f$ is stable. Finally, we show a conjecture by Houston that any family $f_t\colon\CCS{n}\rightarrow\CCzero{n+1}$ with $\mu_I(f_t)$ constant is excellent in Gaffney's sense. By technical reasons, in the two last properties we consider only the corank 1 case.
\end{abstract}

\maketitle

\section{Introduction}

The image Milnor number is an invariant introduced by D. Mond in \cite{Mond1991} for map-germs $f\colon\CCS{n}\rightarrow\CCzero{n+1}$ with isolated instability. Based on results of Lê and Siersma (cf. \cite{Trang1987} and \cite{Siersma1991}), he showed that the image of a stable perturbation of $f$ has the homotopy type of a wedge of $n$-spheres and that the number of such spheres is independent of the stabilisation. He called this number, denoted by $\mu_I(f)$, the image Milnor number by its analogy with the classical Milnor number $\mu(X,0)$ of a hypersuface $(X,0)$ with isolated singularity. In order to ensure the existence of a stabilisation of $f$, it was considered in \cite{Mond1991} only the case where $(n,n+1)$ are nice dimensions in the sense of Mather (cf. \cite{Mather1971}). But when $f$ has corank 1, it always admits a stabilisation, so there is no reason to put any restriction on the dimensions in such case.

In this paper we will show three basic results on the image Milnor number. The first one is in Section \ref{sect:conservation} and is about the conservation of the image Milnor number. If $F(x,u)=(f_u(x),u)$ is any $r$-parameter unfolding of $f$, then for all $u$ in $\CC^r$ close to 0,
\[ 
\mu_I(f)=\beta_n(X_u)+\sum_{y\in X_u}\mu_I(f_u;y),
\]
where $\beta_n(X_u)$ is the number of spheres (i.e., the $n$-th Betti number) of the image $X_u$ of $f_u$ and $\mu_I(f_u;y)$ is the image Milnor number of $f_u$ at $y\in X_u$ (see Theorem \ref{conservation}). Two immediate consequences of this conservation are that $\mu_I(f)$ is upper semi-continuous (Corollary \ref{upper}) and also that $\mu_I(f)$ is a topological invariant for families of germs (Corollary \ref{top-triv-2}).

The second result is what we call the weak Mond's conjecture in Section \ref{sect:wmc}. The original Mond's Conjecture, cf. \cite{Mond1991}, says that
\[
\eqA_e\text{-}\codim(f)\le \mu_I(f),
\]
with equality if $f$ is weighted homogeneous. Here $\eqA_e\text{-}\codim(f)$ is the $\eqA_e$-codimension of $f$, that is, the minimal number of parameters in a versal unfolding of $f$. This conjecture is known to be true for $n=1,2$ (see \cite{deJong1991,Mond1991} for $n=2$ and \cite{Mond1995} for $n=1$) but it remains open for $n\ge 3$. In our weak version of the conjecture (Theorem \ref{wmc}) we prove that $\mu_I(f)=0$ if and only if $\eqA_e\text{-}\codim(f)=0$ or, equivalently, $f$ is stable (by Mather's criterion of infinitesimal stability). Since we use here the results of Houston on the image computing spectral sequence (cf. \cite{Houston2010}), we have to restrict ourselves to the corank 1 case.

Finally, in Section \ref{sect:hc} we prove a conjecture by Houston in \cite{Houston2010} relative to excellent unfoldings. Following Gaffney in \cite{Gaffney1993}, an origin-preserving one-parameter unfolding $F(x,t)=(f_t(x),t)$ is excellent if it admits a stratification by stable types such that the parameter axes are the only 1-dimensional strata (see \ref{def:excellent} for a more precise definition). Excellent unfoldings play an important role in the theory of equisingularity of mappings. In fact, if the unfolding is excellent, then the polar multiplicity theorem of Gaffney states that the Whitney equisingularity of family is equivalent to the constancy of the polar multiplicities associated to all the strata in the source and target (see \cite{Gaffney1993}). The conjecture of Houston is that the constancy of the image Milnor number $\mu_I(f_t)$ is also a sufficient condition for an unfolding to be excellent. We prove this result in Theorem \ref{hc} (also provided that $f$ has corank 1).

We refer to the book \cite{Mond-Nuno2020} for basic definitions and properties about singularities of mappings, such as stability, finite determinacy, versal unfoldings, etc.

\medskip
\noindent
\emph{Acknowledgements:}
The authors thank G. Peñafort-Sanchis for his help with the results of Section \ref{sect:conservation} and the anonymous referee for the careful reading and valuable suggestions.

\section{Conservation of the Image Milnor number}\label{sect:conservation}

%
%
%
%
%
%
%
%
%
We recall the definition of the Milnor fibration (see \cite[Theorem 4.8 and Theorem 5.8]{Milnor1968}). 
Let $g:\CCzero{n+1}\rightarrow\CCzero{}$ be a holomorphic non-zero function which defines a hypersurface $X=g^{-1}(0)$ in $(\CC^{n+1},0)$ with arbitrary singularities (either isolated or non-isolated). We fix a Whitney stratification on $X$. We denote by $B_\epsilon$ the closed ball of radius $\epsilon$ centred at $0$ in $\CC^{n+1}$, with boundary $S_\epsilon=\partial B_\epsilon$ and interior $\mathring B_\epsilon=B_\epsilon\setminus S_\epsilon$.

A \emph{Milnor radius} is a number $\epsilon>0$ such that $S_{\epsilon'}$ is transverse to $X$, for all $\epsilon'$ such that $0<\epsilon'\le\epsilon$. This implies that $X\cap B_\epsilon$ is homeomorphic to the cone on $X\cap S_\epsilon$.

Once we have fixed $\epsilon>0$, there exists $\eta>0$ such that 
$$g:g^{-1}(\mathring{D}_\eta)\cap B_\epsilon\rightarrow \mathring{D}_\eta$$
is a locally trivial fiber bundle over $\mathring{D}_\eta\setminus\left\{0\right\}$. 
Here, $\mathring{D}_\eta$ is the open disk of radius $\eta$ centred at 0 in $\CC$. 
The choice of $\eta$ has to be made in such a way that for all $t$ such that $0<|t|<\eta$, then $t$ is a regular value of $g$ and also $S_\epsilon$ is transverse to $g^{-1}(t)$. This is called the \emph{Milnor fibration} and the fibres are called \emph{Milnor fibres}.

Now we consider an $r$-parameter deformation of $g$, that is, a holomorphic germ $G\colon(\CC^{n+1}\times\CC^{r},0)\rightarrow(\CC,0)$ written as $G(y,u)=g_u(y)$ and such that $g_0=g$. Then $G$ defines a hypersurface $\mathcal X=G^{-1}(0)$ in $(\CC^{n+1}\times\CC^{r},0)$ which is a deformation of $X$. We assume that $\mathcal X$ also has a Whitney stratification whose restriction to $\{u=0\}$ coincides with that of $X$.

\begin{definition}[cf. {\cite[page 2]{Siersma1991}}]
We say that the deformation $G$ is \emph{topologically trivial over the Milnor sphere $S_\epsilon$} if, for $\eta$ and $\rho$ small enough,
\begin{equation}\label{top-triv-sphere}
\begin{CD}(S_\epsilon \times \mathring{B}_\rho) \cap G^{-1}(\mathring{D}_\eta)@>(G,\id)>>& \mathring{D}_\eta\times \mathring{B}_\rho \end{CD}
\end{equation}
is a stratified submersion with strata $\left\{0\right\}\times \mathring{B}_\rho$ and $(\mathring{D}_\eta \setminus \left\{0\right\})\times \mathring{B}_\rho$ on $\mathring{D}_\eta\times \mathring{B}_\rho $ and the induced stratification on $(S_\epsilon \times \mathring{B}_\rho) \cap G^{-1}(\mathring{D}_\eta)$.
\end{definition}

Since we have a Whitney stratification on $\mathcal X$, the restriction of \eqref{top-triv-sphere} to each stratum in the target is a locally trivial $C^0$- fibration,  by the Thom-Mather first isotopy lemma (cf. \cite[Theorem 5.2]{Gibson1976a}).

\begin{theorem}[cf. {\cite[Theorem 2.3]{Siersma1991}}]\label{siersma}
With the notation above, let $G$ be a deformation of $g$ which is topologically trivial over a Milnor sphere. Let $u\in \mathring{B}_\rho$ and suppose that all the fibres of $g_u$ are smooth or have isolated singularities except for one special fibre $X_{u}\coloneqq g_u^{-1}(0)\cap B_\epsilon$. Then $X_u$ is homotopy equivalent to a wedge of spheres of dimension $n$ and its number is the sum of the Milnor numbers over all the fibres different from $X_u$.
\end{theorem}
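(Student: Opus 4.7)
The plan is to realise $X_u$ as the zero fibre of a contractible total space $T_u \coloneqq g_u^{-1}(\mathring{D}_\eta) \cap B_\epsilon$ by combining the topological triviality over $S_\epsilon$ with classical Milnor theory, and then to read off the homotopy type of $X_u$ from the $\mu_i$ vanishing cycles supported on the remaining critical fibres.

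First, I would apply Thom-Mather's first isotopy lemma to the stratified submersion provided by topological triviality, so that the restriction of $g_u$ to $T_u$ is proper and locally trivial over $\mathring{D}_\eta \setminus \Sigma_u$, where $\Sigma_u = \{0, t_1, \ldots, t_k\}$ is the finite set of critical values of $g_u$ in $\mathring{D}_\eta$. I would then show that $T_u$ is contractible: the trivialization over $S_\epsilon$ provides a stratified homeomorphism $T_u \cap S_\epsilon \cong T_0 \cap S_\epsilon$ which, combined with the cone structure of $B_\epsilon$, extends to a homotopy equivalence $T_u \simeq T_0$; and $T_0$ deformation retracts onto $X_0 \cap B_\epsilon$, itself a cone by the Milnor radius condition on $g$. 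Next, fixing a regular value $t^* \in \mathring{D}_\eta$ together with a system of non-intersecting paths from $t^*$ to each $t_i$, I would use the local Milnor fibration at each isolated singularity of $g_u^{-1}(t_i)$, $i \ge 1$, to produce $\mu_i \coloneqq \mu(g_u^{-1}(t_i))$ Lefschetz thimbles in $T_u$. Transporting these thimbles along the segment $[t_i,0]$ via the fibration trivialization and absorbing their boundary vanishing cycles into $X_u$ yields a relative CW-structure in which $T_u$ is obtained from $X_u$ by attaching exactly $\mu \coloneqq \sum_{i \ge 1} \mu_i$ cells of dimension $n+1$. The cellular chain complex, together with the contractibility of $T_u$, then forces $\tilde H_n(X_u) \cong \Z^\mu$ and $\tilde H_i(X_u) = 0$ for $i \ne n$.

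Finally, to pass from homology to homotopy type, I would show that $X_u$ is $(n-1)$-connected and has the homotopy type of a CW-complex of dimension at most $n$; the dimension bound follows from $X_u$ being a closed analytic subset of complex dimension $n$ in a Stein ball, and the connectivity follows from a Lefschetz-type argument for the inclusion $X_u \hookrightarrow T_u$ using the fibration structure. Whitehead's theorem then yields $X_u \simeq \bigvee_\mu S^n$. The main obstacle is the relative CW-decomposition step: one must verify that transporting the thimbles to $X_u$ along $[t_i,0]$ does not introduce additional cells arising from the (possibly non-isolated) singular locus of $X_u$. The hypothesis that $X_u$ is the only fibre with non-isolated singularities is precisely what guarantees this, since the Milnor tube of $X_u$ inside $T_u$ deformation retracts onto $X_u$ without producing spurious cells.
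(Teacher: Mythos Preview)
The paper does not supply its own proof of this theorem; it is quoted as \cite[Theorem~2.3]{Siersma1991} and used as a black box throughout. There is therefore nothing in the paper to compare your attempt against.

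For what it is worth, your outline follows the Lê--Siersma strategy faithfully and is essentially the argument in Siersma's original paper: contractibility of the tube $T_u$, attachment of Lefschetz thimbles over the isolated critical values $t_1,\dots,t_k$, and then an upgrade from homology to homotopy type via the Stein dimension bound. One step is too quick, however. Your justification that $T_u$ is contractible---``the trivialisation over $S_\epsilon$ gives $T_u\cap S_\epsilon\cong T_0\cap S_\epsilon$, which combined with the cone structure of $B_\epsilon$ extends to $T_u\simeq T_0$''---does not work as stated: a homeomorphism of boundaries does not by itself extend to a homotopy equivalence of the tubes, and the cone structure of $B_\epsilon$ concerns $X_0$, not $T_u$. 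What is actually needed (and what Siersma does) is to use the stratified submersion hypothesis to build a controlled vector field on all of $B_\epsilon\times\mathring B_\rho$ lifting $\partial/\partial u$, integrate it to obtain a homeomorphism of pairs $(B_\epsilon,T_u)\cong(B_\epsilon,T_0)$, and only then invoke the conical retraction of $T_0$ onto $X_0\cap B_\epsilon$. Once this is fixed, the rest of your sketch is sound.
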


\begin{example} The condition that $G$ is topologically trivial over a Milnor sphere is necessary in Theorem \ref{siersma}. For instance, consider $G:(\C^3\times\C,0)\to(\C,0)$ given by $G(x,y,z,u)=xy-u$. For $u\ne0$, $X_{u}=g_u^{-1}(0)\cap B_\epsilon$ has not the homotopy type of a wedge of 2-spheres (in fact, it has the homotopy type of $S^1$).
\end{example}

Let $f\colon\CCS{n}\rightarrow\CCzero{n+1}$ be an $\eqA$-finite germ, that is, with finite $\eqA_e$-codimension. By the Mather-Gaffney criterion (see e.g. \cite[Theorem 4.5]{Mond-Nuno2020}), this is equivalent to that $f$ has isolated instability. In particular, $f$ is finite and hence, its image is an analytic hypersurface $(X,0)$ in $(\CC^{n+1},0)$. We take a holomorphic function $g:\CCzero{n+1}\rightarrow\CCzero{}$ such that $g=0$ is a reduced equation for $X$. We will assume that either $(n,n+1)$ are nice dimensions in Mather's sense (cf. \cite[page 208]{Mather1971}) or $f$ has corank 1. In both cases, $X$ has a natural stratification given by the stable types. This stratification is analytically trivial and hence a Whitney stratification (see \cite[Corollary 7.5]{Mond-Nuno2020}).

Consider now an unfolding $F\colon(\CC^n\times\CC^r,S\times\{0\})\to(\CC^{n+1}\times\CC^r,0)$ of $f$. Write $F(x,u)=(f_u(x),u)$, as usual, with $f_0=f$. We denote by $(\mathcal X,0)$ the image of $F$ in $(\CC^{n+1}\times\CC^r,0)$ and choose a holomorphic function $G\colon(\CC^{n+1}\times\CC^r,0)\to(\CC,0)$ such that $G=0$ is a reduced equation of $\mathcal X$ and $g_0=g$, where $g_u(y)=G(y,u)$. We also consider in $\mathcal X$ the natural Whitney stratification by stable types, which has the property that its restriction to $u=0$ coincides with the stratification of $X$.
We say that $G$ is a deformation of $g$ \emph{induced} by the unfolding $F$.

\begin{lemma}  Let $f$ be $\eqA$-finite such that either $(n,n+1)$ are nice dimensions or $f$ has corank 1. Any deformation $G$ induced by an unfolding $F$ is topologically trivial over a Milnor sphere. 
\end{lemma}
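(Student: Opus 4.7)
The plan is to establish the required stratified submersivity by a standard openness-of-transversality argument, exploiting the fact that the stable-type stratification of $\mathcal{X}$ is analytically trivial along the unfolding direction. The three ingredients to combine are: a Milnor radius $\epsilon$ for $g$ at $0$; a parameter radius $\rho$ obtained by propagating transversality from $u=0$; and a disk radius $\eta$ obtained from the usual Milnor-fibration choice, made uniform in $u$.

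First I would fix a Milnor radius $\epsilon>0$ for $g$ at the origin, so that $S_{\epsilon'}$ is transverse to every stratum of $X$ for all $0<\epsilon'\le\epsilon$; this exists because the stable-type stratification of $X$ has only finitely many strata through $0$ (as $f$ has isolated instability). Since the stable-type stratification of $\mathcal{X}$ restricts to that of $X$ on $\{u=0\}$ and, by \cite[Corollary 7.5]{Mond-Nuno2020}, is analytically trivial in the $u$-direction, each stratum of $\mathcal{X}$ is locally a product of a stratum of $X$ with a neighbourhood of $0$ in $\CC^r$ (up to an analytic isomorphism preserving $u$). Transversality being an open condition and $S_\epsilon\times\{0\}$ being compact, I would deduce the existence of $\rho>0$ such that $S_\epsilon\times\mathring{B}_\rho$ is transverse, inside $\CC^{n+1}\times\CC^r$, to every stratum of $\mathcal{X}$.

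Next I would choose $\eta>0$ such that, for every $u\in\mathring{B}_\rho$ and every $t$ with $0<|t|<\eta$, the value $t$ is a regular value of $g_u$ and the smooth fibre $g_u^{-1}(t)$ meets $S_\epsilon$ transversely. This is again an openness plus compactness argument applied to Milnor's choice at $(u,t)=(0,0)$, using that the critical locus of $g_u$ accumulates at $0$ only along $G^{-1}(0)$. The conclusion is then formal: on the stratum $(\mathring{D}_\eta\setminus\{0\})\times\mathring{B}_\rho$, the map $(G,\id)$ restricted to $(S_\epsilon\times\mathring{B}_\rho)\cap G^{-1}(\mathring{D}_\eta\setminus\{0\})$ is a submersion because the smooth fibres $g_u^{-1}(t)$ meet $S_\epsilon$ transversely; on the stratum $\{0\}\times\mathring{B}_\rho$, each induced stratum of $(S_\epsilon\times\mathring{B}_\rho)\cap G^{-1}(0)$ submerses onto $\mathring{B}_\rho$ because $S_\epsilon\times\mathring{B}_\rho$ is transverse to the corresponding stratum of $\mathcal{X}\subset G^{-1}(0)$. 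This is precisely the definition of stratified submersion in \eqref{top-triv-sphere}.

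The main obstacle I anticipate is the rigorous justification of the uniform transversality step: transferring the Milnor-radius property at $u=0$ to a whole neighbourhood $\mathring{B}_\rho$. Here the analytic triviality of the stable-type stratification in the parameter direction is essential, since it produces, locally over each stratum of $X\cap S_\epsilon$, an explicit trivialisation whose compactness on $S_\epsilon$ lets one choose a single $\rho$ that works simultaneously for all strata.
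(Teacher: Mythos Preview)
Your overall approach matches the paper's: check stratified submersivity on the compact sphere $S_\epsilon$ at $u=0$, then propagate to nearby $u$ by openness. There is, however, a misattribution at the crucial step. You invoke \cite[Corollary 7.5]{Mond-Nuno2020} to conclude that the stable-type stratification of $\mathcal{X}$ is ``analytically trivial in the $u$-direction'', with each stratum locally a product of a stratum of $X$ with a neighbourhood in $\CC^r$. That corollary only says the stable-type stratification is analytically trivial \emph{along each stratum} (and hence Whitney); it does not say that the $u$-direction lies inside the strata. For a general unfolding $F$ the stable type at a point can genuinely change as $u$ varies, so the strata of $\mathcal{X}$ are not, in general, products with $\CC^r$.

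The correct reason---and the one the paper uses---is that $f$ has isolated instability, so $f$ is already locally \emph{stable} at every point $y\in X\cap S_\epsilon$. Since every unfolding of a stable germ is analytically trivial (via a $u$-preserving isomorphism), $F$ is trivial near each $(y,0)$ with $y\in S_\epsilon$; this is precisely the triviality in the $u$-direction that your argument needs, localised to a neighbourhood of $S_\epsilon\times\{0\}$. With this correction, your compactness step produces $\rho$, and the remainder of your argument (the choice of $\eta$ and the verification on the two target strata) coincides with the paper's proof.
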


\begin{proof} The proof of this lemma is basically the same that appears in \cite[proof of Theorem 1.4]{Mond1991} in the particular case that $F$ is a stabilisation of $f$. On one hand, $f$ is $\eqA$-finite, hence it has isolated instability, so $f$ is locally stable on $S_\epsilon$. On the other hand, $g$ is regular on $S_\epsilon$ by definition of Milnor radius. Since $S_\epsilon$ is compact, we can assume, after shrinking $\rho$ if necessary, that $f_u$ is locally stable on $S_\epsilon$ and $g_u$ has no critical points on $S_\epsilon$, for all $u\in \mathring{B}_\rho$. Now we prove that 
\[
\begin{CD}(S_\epsilon \times \mathring{B}_\rho) \cap G^{-1}(\mathring{D}_\eta)@>(G,\id)>>& \mathring{D}_\eta\times \mathring{B}_\rho \end{CD}
\]
is a stratified submersion.

In fact, let $(y,u)\in (S_\epsilon \times \mathring{B}_\rho) \cap G^{-1}(\mathring{D}_\eta)$. If $y\in X_u$ then $f_u$ is stable at $y$ and hence, $F$ is (analytically) trivial in a neighbourhood of $(y,u)$. This implies that the induced stratification in $(S_\epsilon \times \mathring{B}_\rho)\cap \mathcal X$ is also (analytically) trivial in a neighbourhood of $(y,u)$. In particular, the map
\[
\begin{CD}(S_\epsilon \times \mathring{B}_\rho) \cap \mathcal X @>0\times\id>>& \{0\}\times \mathring{B}_\rho \end{CD}
\]
is a stratified submersion at $(y,u)$. Otherwise, if $y\notin X_u$, then $y$ is a regular point of $g_u$ and hence, $(y,u)$ is a regular point of $(G,\id)$. It follows that
\[
\begin{CD}(S_\epsilon \times \mathring{B}_\rho) \cap G^{-1}(\mathring{D}_\eta\setminus\{0\}) @>(G,\id)>>& (\mathring{D}_\eta\setminus\{0\}) \times \mathring{B}_\rho \end{CD}
\]
is a submersion at $(y,u)$.
\end{proof}

It follows from Theorem \ref{siersma} that for all $u$ small enough, $X_u$ is homotopy equivalent to a wedge of spheres of dimension $n$ and its number is the Betti number
\[
\beta_n(X_u)=\sum_{y\in B_\epsilon\setminus X_u}\mu(g_u;y).
\]

Since $f$ is finite, it has finite singularity type and hence, there exists a stable unfolding $F$ of $f$.
The bifurcation set $B(F)$ is the set germ of parameters $u\in \mathring{B}_\rho$, for $\rho>0$ small enough, such that $f_u$ is not a locally stable mapping. When $(n,n+1)$ are nice dimensions or $f$ has corank 1 we know that $B(F)$ is a proper analytic subset of $(\CC^r,0)$ (see \cite[Propositions 5.5 and 5.6]{Mond-Nuno2020}).

\begin{definition} Let $f$ be $\eqA$-finite such that either $(n,n+1)$ are nice dimensions or $f$ has corank $1$. Take $F$ a stable unfolding of $f$ and $u\in \mathring{B}_\rho\setminus B(F)$, for $\rho$ small enough. The mapping $f_u$ is called a \emph{stable perturbation} of $f$, its image $X_u$ is called the \emph{disentanglement} of $f$ and the number of spheres $\beta_n(X_u)$ is called the \emph{image Milnor number} and is denoted by $\mu_I(f)$.
\end{definition}

The image Milnor number $\mu_I(f)$ is well defined, that is, it is independent of the choice of the parameter $u$, of the representatives and of the stable unfolding $F$ (see \cite[Section 8]{Mond-Nuno2020} for details). 

\begin{rem}
When $(n,n+1)$ are not nice dimensions and $f$ has corank $>1$, the definition of $\mu_I(f)$ can be done analogously by taking Mather's canonical stratification of the image instead of the stratification by stable types and taking a parameter $u$ such that $f_u$ is topologically stable instead of stable. However, we will not consider these cases in this paper.
\end{rem}

\begin{rem}
A  \textit{stabilisation} of $f$ is a $1$-parameter unfolding $F(x,t)=(f_t(x),t)$ of $f$ with the property that $f_t$ is a locally stable mapping for all $t\ne0$ close to the origin. A stabilisation of $f$ always exists when $(n,n+1)$ are nice dimensions or $f$ has corank $1$ (cf. \cite[Corollary 5.4]{Mond-Nuno2020}). Moreover, given a second stable unfolding $F'(x,u)=(f_u(x),u)$ of $f$ we can take the sum of unfoldings
\[
F''(x,u,t)=(f_u(x)+f_t(x)-f(x),u,t),
\]
which is also stable. For all $t\ne0$, $f_t$ is stable, so $(0,t)\notin B(F'')$ and hence $\mu_I(f)=\beta_n(X_t)$, where $X_t$ is the image of $f_t$. This is in fact the definition of $\mu_I(f)$ given originally by D. Mond in \cite[Theorem 1.4]{Mond1991} in terms of a stabilisation instead of a stable unfolding.
\end{rem}

The following property can be seen as the conservation of the image Milnor number.
%

\begin{theorem}\label{conservation}
Let $f$ be $\eqA$-finite such that either $(n,n+1)$ are nice dimensions or $f$ has corank $1$.  
Let $F$ be any unfolding of $f$. Take $u\in \mathring{B}_\rho$, with $\rho>0$ small enough. 
Then,
$$ \mu_I(f)=\beta_n(X_u)+\sum_{y\in X_u}\mu_I(f_u;y),$$
where $\mu_I(f_u;y)$ is the image Milnor number of $f_u$ at $y\in X_u$.
\label{guillesiersma}
\end{theorem}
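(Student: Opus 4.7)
My plan is to extend $F$ to a stable unfolding of $f$, apply Siersma's Theorem~\ref{siersma} twice (at a generic parameter and at $(u,0)$), and identify the three resulting Milnor number sums with $\mu_I(f)$, $\mu_I(f_u;y_i)$, and $\beta_n(X_u)$ respectively.

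Concretely, I extend $F$ to a stable unfolding $F''$ of $f$ with parameters $(u,v)\in\CC^{r}\times\CC^{s}$ such that $F''|_{v=0}=F$, via the sum-of-unfoldings construction in the preceding Remark. Let $G''$ be the induced deformation of $g$, topologically trivial over $S_\epsilon$ by the Lemma. List the instability points $y_1,\dots,y_k$ of $f_u$ in $X_u$, choose pairwise disjoint open balls $U_i\ni y_i$ in $\CC^{n+1}$ so small that every isolated off-$X_u$ critical point of $g_u$ lies outside $\bigcup_i\overline{U_i}$, set $W=B_\epsilon\setminus\bigcup_i\overline{U_i}$, and pick $v$ generic with $(u,v)\notin B(F'')$, so that $f_{(u,v)}$ is a stable perturbation of $f$. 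Siersma's Theorem at $(u,v)$ gives
\[
\mu_I(f)=\beta_n(X_{(u,v)})=\sum_{y'\in B_\epsilon\setminus X_{(u,v)}}\mu(g_{(u,v)};y'),
\]
which I split into sums over $W$ and over each $U_i$.

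For each $i$, stability of $F''$ implies that near $(y_i,u,0)$ it is a stable unfolding of the multi-germ $(f_u,y_i)$, so $f_{(u,v)}|_{U_i}$ is a stable perturbation of $(f_u,y_i)$; applying Siersma locally inside $U_i$ identifies the corresponding partial sum with $\mu_I(f_u;y_i)$. For the $W$-piece, Siersma applied to $G''$ at $(u,0)$ yields $\beta_n(X_u)=\sum_{y'\in B_\epsilon\setminus X_u}\mu(g_u;y')$, and by our choice of the $U_i$ all these $y'$ lie in $W$; then, since $f_u$ is locally stable on $W$, Mather's stability theorem trivialises the $v$-family $F''|_W$, producing a biholomorphism that conjugates $g_u|_W$ and $g_{(u,v)}|_W$ up to a unit and hence preserves the sum of Milnor numbers at off-image critical points. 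Combining the three identities gives $\mu_I(f)=\beta_n(X_u)+\sum_i\mu_I(f_u;y_i)$, which is the asserted formula (noting $\mu_I(f_u;y)=0$ whenever $f_u$ is stable at $y$).

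The main obstacle is the second application of Siersma at the non-generic parameter $(u,0)$: it requires that the fibres $g_u^{-1}(t)\cap B_\epsilon$ for $t\ne 0$ be at most isolated-singular, i.e.\ that the off-$X_u$ part of the critical locus of $g_u$ be $0$-dimensional. The non-isolated critical locus of $g_u$ is contained in $X_u$ (it parametrises the stable multiple-point strata of the image), so the real delicate point is how the critical locus of $g$ Morsifies away from $X_u$ under the unfolding $F$; the $\eqA$-finiteness of $f$ is what makes this controllable. The stability-triviality step in the $W$-piece—where one conjugates $g_u|_W$ and $g_{(u,v)}|_W$—rests on the same infrastructure via Mather's infinitesimal stability theorem applied to $F''|_W$.
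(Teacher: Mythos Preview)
Your overall architecture matches the paper's: enlarge $F$ to a stable unfolding, invoke Siersma's theorem at the generic parameter and at $u$, and decompose $B_\epsilon$. The gap is in the $W$-piece, specifically the sentence ``producing a biholomorphism that conjugates $g_u|_W$ and $g_{(u,v)}|_W$ up to a unit and hence preserves the sum of Milnor numbers at off-image critical points.'' The off-image critical points $z_1,\dots,z_m$ of $g_u$ lie in $W$ but not on $X_u$; they are features of the defining \emph{function} $g_u$, not of the \emph{map} $f_u$, and stability of $f_u$ on $W$ says nothing directly about them. Even granting a biholomorphism $\Psi$ of $W$ carrying $X_u$ onto $X_{(u,v)}$, you only obtain $g_{(u,v)}=\phi\cdot(g_u\circ\Psi^{-1})$ for some unit $\phi$, and multiplying by a unit does \emph{not} preserve off-zero-set critical points or their total Milnor number. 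For instance $g=y_1^2+y_2^2$ (the equation of a node, hence a legitimate image) has no off-zero-set critical points, whereas $(1+y_1)\,g$ acquires a Morse point at $(-2/3,0)$. So the implication ``up to a unit $\Rightarrow$ same off-image Milnor sum'' is false, and this is where your argument breaks.

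The paper's remedy is to refine the decomposition: besides the Milnor balls $B_{\epsilon_i}$ around the instability points $y_i$, it places disjoint Milnor balls $B_{\delta_j}$ around each off-image critical point $z_j$ of $g_u$ and invokes the \emph{classical} conservation of the Milnor number of a function there, namely $\mu(g_u;z_j)=\sum_{w\in B_{\delta_j}}\mu(g_{u'};w)$ for $u'$ near $u$. On the remaining compact set $K=B_\epsilon\setminus\bigl(\bigcup_i \mathring B_{\epsilon_i}\cup\bigcup_j \mathring B_{\delta_j}\bigr)$ the function $g_u$ has no off-image critical points at all, and one argues that for $u'$ close to $u$ and generic the same holds for $g_{u'}$; this is the only place where stability of $f_u$ on $K\cap X_u$ is actually needed, to prevent critical points of $g_{u'}$ from escaping $X_{u'}$ near the singular strata. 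Summing the three kinds of contribution yields the formula. Your proof becomes correct once you isolate the $z_j$ in their own balls and replace the triviality-of-$F''$ step by function-theoretic Milnor-number conservation.
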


\begin{proof}
By taking the sum of $F$ with a stable unfolding, we can assume that $F$ is itself stable.
Since $f$ is $\eqA$-finite, $f$ has isolated instability at the origin by the Mather-Gaffney criterion. 
This implies that $f_u$ has only finitely many unstable singularities which we denote by 
$y_1,\dots,y_k\in X_u$ and hence,
\[
\sum_{y\in X_u}\mu_I(f_u;y)=\sum_{i=1}^k\mu_I(f_u;y_i).
\]
Also by Theorem \ref{siersma}, $g_u$ has only finitely many (isolated) critical points on $B_\epsilon\setminus X_u$, which we denote by $z_1,\dots,z_m$, so that
\[
\beta_n(X_u)=\sum_{j=1}^m\mu(g_u;z_j).
\]

For each $i=1,\dots,k$ we choose a Milnor ball $B_{\epsilon_i}$ for $g_u$ at $y_i$ contained in $B_\epsilon$. Analogously, for each $j=1,\dots,m$ we choose also a Milnor ball $B_{\delta_j}$ for $g_u$ at $z_j$ contained in $B_\epsilon\setminus X_{u}$. We will assume that the balls $B_{\epsilon_1},\dots,B_{\epsilon_k},B_{\delta_1},\dots,B_{\delta_m}$ are pairwise disjoint (see fig. \ref{fig:siersmadef2}, right).

\begin{figure}[htbp]
		\includegraphics[width=1.50\textwidth, center]{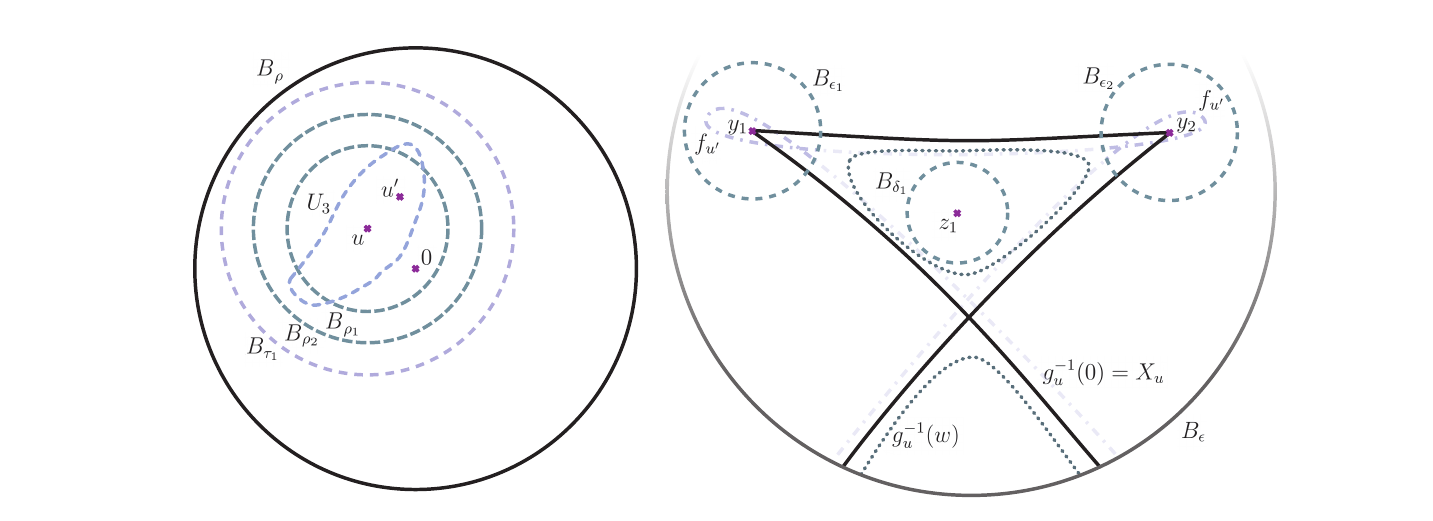}
	\caption{Balls in the parameter space (left) and in the target (right).}
	\label{fig:siersmadef2}
\end{figure}

Again by Theorem \ref{siersma}, for each $i=1,\dots,k$, there exists an open ball  $\mathring B_{\rho_i}$ centered at $u$ and contained in $\mathring{B}_\rho$ such that
\[
\mu_I(f_{u};y_i)=\beta_n(X_{u'}\cap B_{\epsilon_i})=\sum_{w\in B_{\epsilon_i}\setminus X_{u'}}\mu(g_{u'};w),
\]
for all $u'\in \mathring B_{\rho_i}\setminus B(F)$. We set $U_1=\mathring{B}_{\rho_1}\cap\dots \cap \mathring{B}_{\rho_k}$ (see fig. \ref{fig:siersmadef2}, left).

For each $j=1,\dots,m$, $z_j$ is an isolated critical point of $g_u$ and $X_u\cap B_{\delta_j}=\emptyset$. By the conservation of the Milnor number of a function, there exists another open ball $\mathring B_{\tau_j}$ centered at $u$ and contained in $\mathring{B}_\rho$ such that
\[
\mu(g_u;z_j)=\sum_{w\in B_{\delta_j}}\mu(g_{u'};w),
\]
and also $X_{u'}\cap B_{\delta_j}=\emptyset$, for all $u'\in \mathring B_{\tau_j}$.  As above, we set $U_2=\mathring B_{\tau_1}\cap\dots \cap \mathring B_{\tau_m}$.

Consider the compact set 
\[
K=B_\epsilon\setminus \left(\bigcup_{i=1}^k \mathring B_{\epsilon_i}\cup\bigcup_{j=1}^m \mathring B_{\delta_j}\right).
\]
Since $g_u$ has no critical points on $K\setminus X_u$, there exists another open neighbourhood $U_3$ of $u$ in $\mathring{B}_\rho$ such that $g_{u'}$ has no critical points on $K\setminus X_{u'}$,
for all $u'\in U_3\setminus B(F)$.

Finally, again by Theorem \ref{siersma}, 
\begin{align*}
\mu_I(f)&=\beta_n(X_{u'})=\sum_{w\in B_\epsilon\setminus X_{u'}}\mu(g_{u'};w)\\
&=\sum_{i=1}^k \sum_{w\in B_{\epsilon_i}\setminus X_{u'}}\mu(g_{u'};w)+
\sum_{j=1}^m \sum_{w\in B_{\delta_j}}\mu(g_{u'};w)\\
&=\sum_{i=1}^k\mu_I(f_u;y_i)+\sum_{j=1}^m\mu(g_u;z_j),
\end{align*}
for all $u'\in U_1\cap U_2\cap U_3\setminus B(F)$.

\end{proof}

A straightforward consequence of Theorem \ref{conservation} is that the image Milnor number is upper semi-continuous.

\begin{corollary}\label{upper}
With the conditions and notation of Theorem \ref{conservation}, 
\[
\mu_I(f)\ge \mu_I(f_u;y),
\] 
for all $y\in X_u$.
\end{corollary}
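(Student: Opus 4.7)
The plan is to deduce the inequality directly from the conservation formula in Theorem \ref{conservation}, observing that every term on the right-hand side is non-negative.

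First, I would fix $y \in X_u$ and rewrite the identity
\[
\mu_I(f) = \beta_n(X_u) + \sum_{y'\in X_u} \mu_I(f_u;y')
\]
as
\[
\mu_I(f) = \mu_I(f_u;y) + \beta_n(X_u) + \sum_{\substack{y'\in X_u \\ y'\ne y}} \mu_I(f_u;y').
\]
It then suffices to argue that the remaining terms are non-negative. The Betti number $\beta_n(X_u)$ is of course $\ge 0$ since it counts spheres in the wedge decomposition provided by Theorem \ref{siersma}. Each summand $\mu_I(f_u;y')$ is also non-negative, again because it equals $\beta_n$ of the disentanglement of the germ of $f_u$ at $y'$, which is a wedge of $n$-spheres (or empty, in which case it is zero). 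Dropping these non-negative terms yields the desired inequality.

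There is no real obstacle here; the only thing to double-check is that the notion $\mu_I(f_u;y)$ is well-defined at an arbitrary $y\in X_u$. If $f_u$ is stable at $y$, then $f_u$ at $y$ is its own stable perturbation, its disentanglement is contractible, and $\mu_I(f_u;y) = 0$, so the inequality is trivial. If $f_u$ is unstable at $y$, then by $\eqA$-finiteness of $f$ (and the fact that unstable points are isolated for $F$ stable, or handled via passing to a stable unfolding as in the first line of the proof of Theorem \ref{conservation}) the germ $(f_u,y)$ is itself $\eqA$-finite, so $\mu_I(f_u;y)$ is defined in the same way and is $\ge 0$. This completes the argument.
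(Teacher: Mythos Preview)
Your argument is correct and is exactly what the paper intends: it states the corollary as a ``straightforward consequence'' of Theorem \ref{conservation} without giving a separate proof, and your observation that all remaining terms in the conservation formula are non-negative is precisely that straightforward deduction.
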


The upper semi-continuity of $\mu_I(f)$ has been also obtained by Houston in \cite[Theorem 4.3]{Houston2010} but in the particular case that $f$ has corank 1 and either $s(f_u)\le d(f_u)$ or $s(f_u)$ and $ d(f_u)$ are constant (see Section 3 for the definitions of $s(f_u)$ and $d(f_u)$).
\medskip

Another consequence of the conservation is the topological invariance of the image Milnor number for unfoldings. We say that an unfolding $F$ is \emph{topologically trivial} if it is $\eqA$-equivalent as an unfolding to the constant unfolding. That is, if there exist homeomorphisms $\Phi$ and $\Psi$ which are unfoldings of the identity in $(\CC^n,S)$ and $(\CC^{n+1},0)$, respectively, such that $\Psi\circ F\circ \Phi^{-1}=f\times\id$.

\begin{corollary}\label{top-triv-1}
With the conditions and notation of Theorem \ref{conservation}, if $F$ is topologically trivial
$$ \mu_I(f)=\sum_{y\in X_u}\mu_I(f_u;y).$$
\end{corollary}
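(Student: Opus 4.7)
The plan is to derive the corollary directly from Theorem \ref{conservation}. The conservation formula already reads
\[
\mu_I(f)=\beta_n(X_u)+\sum_{y\in X_u}\mu_I(f_u;y),
\]
so the entire task is to show that $\beta_n(X_u)=0$ for all sufficiently small $u$ whenever the unfolding $F$ is topologically trivial.

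To do this, I would first unpack the hypothesis: topological triviality provides a homeomorphism germ $\Psi(y,u)=(\psi_u(y),u)$ at $0\in\CC^{n+1}\times\CC^r$ with $\psi_0=\id$ and such that $\Psi(\mathcal X)=X\times\CC^r$ as set germs. Consequently $\psi_u$ maps the germ of $X_u$ at $0$ homeomorphically onto the germ of $X$ at $0$. Since $\epsilon$ is a Milnor radius, $X\cap B_\epsilon$ is a topological cone on its link $X\cap S_\epsilon$ and in particular is contractible.

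Next I would transfer this contractibility to $X_u\cap B_\epsilon$ via a sandwich argument. By uniform continuity of $\Psi$ on a compact neighbourhood of $B_\epsilon\times\{0\}$, after possibly shrinking $\rho$, I can pick Milnor radii $\epsilon_1<\epsilon<\epsilon_2$ with $B_{\epsilon_1}\subset\psi_u(B_\epsilon)\subset B_{\epsilon_2}$ for every $u\in\mathring B_\rho$. Applying $\psi_u^{-1}$ and using $\psi_u(X_u)=X$ yields
\[
\psi_u^{-1}(X\cap B_{\epsilon_1})\subset X_u\cap B_\epsilon\subset \psi_u^{-1}(X\cap B_{\epsilon_2}).
\]
Both outer sets are homeomorphic to the contractible Milnor representatives $X\cap B_{\epsilon_i}$, and the inclusion $X\cap B_{\epsilon_1}\hookrightarrow X\cap B_{\epsilon_2}$ is a deformation retract via a radial flow adapted to the stratified set $X$, using the transversality of $S_{\epsilon_1}$ and $S_{\epsilon_2}$ with $X$. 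Transporting this retraction through $\psi_u^{-1}$ sweeps $X_u\cap B_\epsilon$ onto its inner contractible subset, so $X_u\cap B_\epsilon$ is itself contractible. By Theorem \ref{siersma} we then conclude $\beta_n(X_u)=0$, which finishes the proof.

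The main obstacle is the sandwich step: being trapped between two contractible sets does not automatically force contractibility, and one really needs to exhibit the deformation retraction between the two Milnor representatives of $X$ and verify that pulling it back through the germ homeomorphism $\psi_u^{-1}$ yields a well-defined retraction of $X_u\cap B_\epsilon$. Once that is in place, all remaining steps follow routinely from Theorem \ref{conservation} and Siersma's theorem.
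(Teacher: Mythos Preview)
Your approach is the same as the paper's, which is extremely brief: it simply records that $\psi_u\circ f_u\circ\phi_u^{-1}=f$, concludes that $X_u$ is homeomorphic to the cone $X$ and hence contractible, and therefore $\beta_n(X_u)=0$. The paper does not comment on the representative-versus-germ issue you correctly isolate; your sandwich set-up is the standard way to make that step honest, and once the nested Milnor balls are in place the argument goes through (one clean way to finish, avoiding the question of whether the pulled-back flow preserves $X_u\cap B_\epsilon$, is to iterate the sandwich once more so that two consecutive inclusions of contractible $\psi_u^{-1}(X\cap B_{\epsilon_i})$'s bracket two consecutive $X_u\cap B_{\epsilon_j}$'s, and then a routine diagram chase on homology forces $H_n(X_u\cap B_\epsilon)=0$).
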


\begin{proof} Write $F(x,u)=(f_u(x),u)$, $\Phi(x,u)=(\phi_u(x),u)$ and $\Psi(y,u)=(\phi_u(y),u)$. Then $\psi_u\circ f_u\circ\phi_u^{-1}$, for all $u$. Hence, $X_u$ is homeomorphic to $X$ which is contractible.
\end{proof}

A particular case is when $F$ is good in the sense of Gaffney \cite[Definition 2.1]{Gaffney1993}. Roughly speaking it means that $F$ has isolated instability uniformly. We will assume that $F$ is a one-parameter unfolding which is origin-preserving, that is, $f_t(S)=\{0\}$, for all $t$.

\begin{definition}\label{good}
We say that an origin-preserving one-parameter unfolding $F(x,t)=(f_t(x),t)$ is \emph{good} if there exists a representative $F:U\to W\times T$, where $U$ is an open neighbourhood of $S\times\{0\}$ in $\CC^n\times\CC$ and $W,T$ are open neighbourhoods of the origin in $\CC^{n+1},\CC$ respectively, such that
		\begin{enumerate}[(i)]
			\item $F$ is finite,
			\item $f_t^{-1}(0)=S$, for all $t\in T$,
			\item $f_t$ is locally stable on $W\setminus\left\{0\right\}$, for all $t\in T$.
		\end{enumerate}
\end{definition}

\begin{corollary}\label{top-triv-2}
If $F$ is a topologically trivial and good unfolding of an $\eqA$-finite germ $f$, then
$\mu_I(f_t)$ is constant for the family of germs $f_t\colon\GS{n}{n+1}$.
\end{corollary}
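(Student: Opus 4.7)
The plan is to apply Corollary \ref{top-triv-1} to $F$ at a generic parameter value and then use goodness to collapse the right-hand side to a single term. Fix a representative $F\colon U\to W\times T$ satisfying the three conditions of Definition \ref{good}, and pick $t\in T$.

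First, since $F$ is topologically trivial, Corollary \ref{top-triv-1} applied at the parameter value $u=t$ gives
\[
\mu_I(f)=\sum_{y\in X_t}\mu_I(f_t;y).
\]
Now I would use goodness to see that only one term in this sum is nonzero. By condition (iii), $f_t$ is locally stable on $W\setminus\{0\}$, so for every $y\in X_t\setminus\{0\}$ the multi-germ of $f_t$ at $f_t^{-1}(y)$ is stable, and hence $\mu_I(f_t;y)=0$. Only $y=0$ contributes, and condition (ii) says $f_t^{-1}(0)=S$, so $\mu_I(f_t;0)$ is precisely the image Milnor number of the germ $f_t\colon\GS{n}{n+1}$. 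Combining these observations yields $\mu_I(f_t)=\mu_I(f)=\mu_I(f_0)$ for every $t$ close to $0$, which is the claim.

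The only subsidiary point that needs to be checked is that the germ $f_t\colon\GS{n}{n+1}$ is $\eqA$-finite for each small $t$, so that $\mu_I(f_t)$ is defined. This is built into the definition of a good unfolding: condition (i) makes each $f_t$ finite, and the combination of (ii) and (iii) localises the instability of $f_t$ exactly at $S=f_t^{-1}(0)$, so the Mather--Gaffney criterion applies. I do not foresee a serious obstacle here; the statement is essentially a direct packaging of Corollary \ref{top-triv-1} with the definition of goodness, and the only thing one has to be careful about is correctly identifying the local image Milnor number $\mu_I(f_t;0)$ at the preimage set $S$ with the image Milnor number of the multi-germ $f_t\colon\GS{n}{n+1}$.
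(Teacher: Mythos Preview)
Your argument is correct and is precisely the route the paper intends: the corollary is stated without proof because it is meant to follow immediately from Corollary~\ref{top-triv-1} together with the three conditions in Definition~\ref{good}, exactly as you spell out. Your additional remark that each $f_t$ is $\eqA$-finite via Mather--Gaffney is a useful clarification that the paper leaves implicit.
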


\section{Weak Mond's conjecture}\label{sect:wmc}

In this section we prove the weak version of Mond's conjecture. We first recall the definition of the multiple point spaces $D^k(f)$ following \cite[Proposition-definition 2.5]{Nuno2017}.

%

\begin{definition}  The \emph{$k$th-multiple point space} $D^k(f)$ of a mapping or a map germ $f$ is defined as follows:
\begin{itemize}
\item
 Let $f\colon X\rightarrow Y$ be a locally stable mapping between complex manifolds. Then $D^k(f)$ is equal to the Zariski closure of the set of points  $\left(x^{(1)},\dots,x^{(k)}\right)$ in $X^k$ such that $f\left(x^{(i)}\right)=f\left(x^{(j)}\right)$, but $x^{(i)}\neq x^{(j)}$ for all $i\neq j$.

\item When $f\colon (\CC^n,S)\rightarrow(\CC^p,0)$ is a stable germ, then $D^k(f)$ is defined analogously but in this case it is a set germ in $\left((\CC^n)^k,S^k\right)$.

\item
Let $f\colon(\CC^n,S)\rightarrow(\CC^p,0)$ be finite and let $F(x,u)=(f_u(x),u)$ be a stable unfolding of $f$. Then $D^k(f)$ is the complex space germ in $\left((\CC^n)^k,S^k\right)$ given by
$$
D^k(f)=D^k(F)\cap\left\{u=0\right\}.
$$
\end{itemize}
\end{definition}

The fact that $D^k(f)$ is independent of the choice of the stable unfolding $F$ can be found in \cite[Lemma 2.3 and Proposition-definition 2.5]{Nuno2017}. In the particular case of a corank 1 mono-germ $f\colon(\CC^n,0)\to(\CC^{p},0)$, we have explicit equations for the multiple point spaces $D^k(f)$. These are given by the so called divided differences of $f$, which were introduced by Mond in \cite[Section 3]{Mond1987}.
The multi-germ version (also in corank 1) can be found in \cite[page 555]{Marar1989}.

Suppose $F\colon X\times U\to Y\times U$ is a mapping of the form $F(x,u)=(f_u(x),u)$. Then $D^k(F)$ contains only $k$-tuples $\left(x^{(1)},u,\dots,x^{(k)},u\right)$ with the same parameter $u$. So, it is more convenient to embed $D^k(F)$ in $X^k\times U$ by identifying such a $k$-tuple with the point $\left(x^{(1)},\dots,x^{(k)},u\right)$.

Given a mapping $f\colon X\to Y$, the symmetric group $\Sigma_k$ acts on $D^k(f)$ by permuting the points $x^{(i)}$. This induces also an action of $\Sigma_k$ on the homology and the cohomology of $D^k(f)$. In general, if $G$ is a subgroup of $\Sigma_k$ acting linearly on a vector space $V$, then the $G$-\emph{alternating part} of $V$ is the subspace
\[
\left\{v\in V:\ \sigma v=\sign(\sigma)v, \text{ for all $\sigma\in G$ }\right\},
\]
and if the group $G$ is $\Sigma_k$ we omit the group and denote this by $V^\Alt$.

\bigskip
 The following definition is due to Houston \cite[Definition 3.9]{Houston2010}:

\begin{definition}\label{mu k}
Let $f\colon(\CC^n,S)\to(\CC^{p},0)$, $n<p$, be $\eqA$-finite of corank 1 and let $F(x,t)=(f_t(x),t)$ be a stabilisation of $f$. We set the following notation:
\begin{itemize}
\item $s(f)=|S|$, the number of branches of the multi-germ; 
\item $d(f)=\sup\{k:\  D^k(f_t)\neq\emptyset\}$, where $f_t$ is a stable perturbation of $f$.
\end{itemize}
The \textit{$k$-th alternating Milnor number of $f$}, denoted by $\mu_k^\Alt(f)$, is defined as
\[
\mu_k^\Alt(f)= \begin{cases}\dim_\QQ H^\Alt_{n+1-k+1}\left(D^k(F),D^k(f_t);\QQ\right),\quad \text{if $k\leq d(f)$,} \\ \\
	\left|\displaystyle\sum_{l=d(f)+1}^{s(f)}(-1)^l{s(f)\choose l}\right|,\quad  \text{if $k=d(f)+1$ and $s(f)>d(f)$,}\\ \\
	0,\quad \text{otherwise.}\end{cases}
\]
\end{definition}

The value of $\mu_k^\Alt(f)$ when $k=d(f)+1$ and $s(f)>d(f)$ can be simplified in the following way:
\[\left|\sum_{l=d+1}^{s}(-1)^l{	s\choose	l	}\right|={	s-1\choose	d}.
	\]
This equality can be proven easily by using elementary properties of binomial numbers. Another useful property is the following lemma, which gives a relation between $s(f)$ and $d(f)$.
\begin{lemma}
In terms of the Definition \ref{mu k}, the inequality $s(f)>d(f)$ can only happen when $d(f)$ has the maximal possible value.
\end{lemma}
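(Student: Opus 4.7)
The plan is to prove the contrapositive: show that if $d(f)$ is strictly less than its maximum possible value $d_{\max}$, then $s(f) \leq d(f)$. I split the argument into three steps: an upper bound $d(f) \leq d_{\max}$, a lower bound $d(f) \geq \min\{s(f), d_{\max}\}$, and a combination step.

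First I would identify $d_{\max}$. Since $f$ is of corank $1$, the divided-difference description of \cite{Marar1989} makes $D^k(f_t)$, for any stable perturbation $f_t$, Cohen--Macaulay of codimension $k(p-n)$ in $(\CC^n)^k$, so of dimension $n-(k-1)(p-n)$ whenever non-empty. Setting $d_{\max} := 1 + \lfloor n/(p-n) \rfloor$, this dimension is non-negative precisely for $k \leq d_{\max}$, so $D^k(f_t) = \emptyset$ for $k > d_{\max}$, giving $d(f) \leq d_{\max}$. The bound is attained, for instance by a transverse crossing of $d_{\max}$ immersions, so $d_{\max}$ really is the maximum of $d(\cdot)$ over corank $1$ $\eqA$-finite germs with the given dimensions.

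Next I would establish $d(f) \geq \min\{s(f), d_{\max}\}$. Fix $k \leq \min\{s(f), d_{\max}\}$ and pick $k$ distinct branches $p_{i_1}, \ldots, p_{i_k} \in S$. Since $f(p_{i_j}) = 0$ and the $p_{i_j}$ are distinct, the tuple $(p_{i_1}, \ldots, p_{i_k})$ lies in $D^k(f) = D^k(F) \cap \{u=0\}$ for any stable unfolding $F(x,u) = (f_u(x), u)$ of $f$; by \cite{Marar1989}, $D^k(f)$ is Cohen--Macaulay of the expected dimension $n-(k-1)(p-n) \geq 0$, while $D^k(F)$ is Cohen--Macaulay of dimension $r + n - (k-1)(p-n)$. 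Because the central fibre of the projection $\pi\colon D^k(F) \to \CC^r$ has the expected dimension, $\pi$ is flat and every nearby fibre $\pi^{-1}(u) = D^k(f_u)$ is non-empty of the same dimension. Choosing $u \notin B(F)$ yields $D^k(f_t) \neq \emptyset$, hence $d(f) \geq k$.

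Combining these bounds, suppose $s(f) > d(f)$. If $s(f) \leq d_{\max}$, the lower bound reads $d(f) \geq s(f)$, contradicting $s(f) > d(f)$. Hence $s(f) > d_{\max}$, the lower bound becomes $d(f) \geq d_{\max}$, and together with the upper bound we conclude $d(f) = d_{\max}$, the maximum possible value. The main technical obstacle is the flatness of $\pi\colon D^k(F) \to \CC^r$ in the second step, which rests on Cohen--Macaulayness of $D^k$ in the corank $1$ setting; in higher corank $D^k$ may fail to have the expected dimension, so this lower bound (and hence the lemma) can break.
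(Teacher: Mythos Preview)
Your proof is correct and follows essentially the same approach as the paper: both arguments first note that $D^k(f)$ is non-empty whenever $k\le s(f)$ (any $k$-tuple of distinct points of $S$ works), and then use the Marar--Mond dimension formula to conclude that $D^k(f_t)$ remains non-empty for nearby $t$ provided $k$ does not exceed the maximal value. The paper carries out this last step via a direct dimension comparison between $D^k(F)$ and $D^k(f)$ for a one-parameter stabilisation $F$, while you phrase it as flatness of $\pi\colon D^k(F)\to\CC^r$ for an $r$-parameter stable unfolding; these are equivalent formulations of the same idea.

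One small slip: you say $D^k(f_t)$ has codimension $k(p-n)$ in $(\CC^n)^k$, but the correct codimension there is $p(k-1)$ (equivalently, codimension $(p-n+1)(k-1)$ in the reduced corank~1 ambient space $\CC^{n-1}\times\CC^k$). Your dimension formula $n-(k-1)(p-n)$ is nonetheless correct, and since only this is used, the argument is unaffected.
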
  

\begin{proof}
Suppose that the maximal possible value for $d(f)$, for map-germs of the type $f\colon(\CC^n,S)\to(\CC^{p},0)$, is $m$. We will assume, for the sake of the contradiction, that for a map-germ $f$ in this pair of dimensions $d(f)<m$ but $d(f)<s(f)$.

Let $k$ be $\textnormal{min}\left\{s(f),m\right\}$, hence $d(f)<k$. If we prove that $D^k(f_t)$ is not empty, for $f_t$ a stable perturbation of $f$, we arrive to a contradiction. To simplify the argument, assume that we have a stabilization $F(x,t)=(f_t(x),t)$ of $f$, so that $f_t$ is a stable perturbation for every $t\neq 0$, and this unfolding is inside another unfolding $\mathcal{F}$ of $f$ that is stable as a map-germ, i.e., $\mathcal{F}(x,t,u)=(f_{t,u}(x),t,u)$ such that $f_{t,0}=f_t$. 

Given that $k\leq s(f)$, necessarily $D^k(f)$ has at least a point. In fact, since $f$ has $s(f)$ branches passing through the origin of $\CC^p$, any subset of $S$ with $k$ distinct points determines a point in $D^k(f)$. However, notice that 
\begin{align*}D^k(f)=&D^k(\mathcal{F})\cap \left\{t=0,u=0\right\}\textnormal{ and}\\
D^k(f_{t_0})=&D^k(\mathcal{F})\cap\left\{t=t_0,u=0\right\}= D^k(F)\cap\left\{t=t_0\right\}
\end{align*}
as well, by definition. Hence, if $D^k(F)$ has bigger dimension than $D^k(f)$ we finish because then the intersection with $\left\{t=t_0\right\}$ will contain at least a point, otherwise the dimensions would be equal.

In fact, since $k \le m$ and $f$ is $\eqA$-finite, by \cite[Theorem 2.14]{Marar1989} it follows that $\textnormal{dim }D^k(f) = nk - p(k - 1)$. The stabilisation $F$ of $f$ is also $\eqA$-finite (see \cite[Exercise 5.4.2]{Mond-Nuno2020}), so that $\textnormal{dim }D^k(F) =  (n + 1)k - (p + 1)(k - 1)$. Since both sets are non-empty, $\textnormal{dim }D^k(F) > \textnormal{dim }D^k(f)$, and this finishes the proof.

\end{proof}

For instance, for a germ $f\colon(\CC^n,S)\to(\CC^{n+1},0)$, we have $s(f)>d(f)$ only when $d(f)=n+1$.

The motivation for the definition of $\mu_k^\Alt(f)$ is the following result by Houston which shows that, for a corank 1 germ $f\colon(\CC^n,S)\to(\CC^{n+1},0)$, the image Milnor number $\mu_I(f)$ is equal to the sum of all the alternating Milnor numbers. 

\begin{proposition}[cf. {\cite[Definition 3.11]{Houston2010}}]\label{mu}
Let $f\colon(\CC^n,S)\to(\CC^{n+1},0)$ be $\eqA$-finite of corank 1. Then,
	$$\mu_I(f)=\sum_k\mu_k^\Alt(f).$$
	\end{proposition}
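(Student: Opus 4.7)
The plan is to invoke Houston's image-computing spectral sequence for corank~$1$ finite maps, in a relative form applied to the pair consisting of a stabilisation and one of its stable fibres. Let $F(x,t)=(f_t(x),t)$ be a stabilisation of $f$, write $\mathcal X=\mathrm{im}(F)$ and $X_t=\mathrm{im}(f_t)$. Since $\mathcal X$ deformation-retracts onto $\mathrm{im}(f)$, which is contractible as the image of a finite map from a germ, and $X_t$ is homotopy equivalent to a wedge of $\mu_I(f)$ copies of $S^n$ by definition of the image Milnor number, the long exact sequence of the pair concentrates the relative homology in degree $n+1$ and yields
\[
\dim_{\QQ}H_{n+1}(\mathcal X,X_t;\QQ)=\mu_I(f).
\]

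Next I would feed the pair $(F,f_t)$ into the relative version of the image-computing spectral sequence, which takes the form
\[
E^{1}_{p,q}=H_{q}\bigl(D^{p+1}(F),D^{p+1}(f_t);\QQ\bigr)^{\Alt}\Longrightarrow H_{p+q}(\mathcal X,X_t;\QQ),
\]
so the problem reduces to identifying each non-trivial $E^1$-term with an alternating Milnor number and showing that only the antidiagonal $p+q=n+1$ survives.

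For $k:=p+1\le d(f)$, the Marar--Mond theorem gives that $D^k(f)$ is an ICIS of dimension $n+1-k$ whose Milnor fibre is $D^k(f_t)$. Because $D^k(F)$ deformation-retracts onto $D^k(f)$, which is contractible, the relative homology $H_q(D^k(F),D^k(f_t);\QQ)$ is non-zero only in degree $q=n+2-k$, and Definition \ref{mu k} identifies its alternating part with $\mu_k^{\Alt}(f)$. For $k=d(f)+1$ with $s(f)>d(f)$ the fibre $D^k(f_t)$ is empty while $D^k(F)$ is the disjoint union of the parameter axes indexed by the $k$-subsets of $S$; a direct count of the $\Sigma_k$-alternating part of this permutation module over the $\binom{s(f)}{k}$ axes yields $\binom{s(f)-1}{d(f)}$, matching the second case of the definition. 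For $k>d(f)+1$ both spaces are empty and the entry vanishes.

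Consequently $E^1_{p,q}$ is concentrated on the single antidiagonal $p+q=n+1$, all differentials are forced to zero, and the spectral sequence degenerates at $E^1$. Summing along this diagonal then gives
\[
\mu_I(f)=\sum_{p\ge 0}\dim_{\QQ}E^\infty_{p,\,n+1-p}=\sum_{k\ge 1}\mu_k^{\Alt}(f).
\]
The main obstacle I anticipate is securing this one-diagonal concentration of the $E^1$-page: it relies essentially on the corank~$1$ hypothesis, both to place the ICIS structure on each $D^k(f)$ via Mond's divided differences, and to make Houston's image-computing spectral sequence available in this relative setting.
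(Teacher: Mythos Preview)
The paper does not give its own proof of this proposition; it merely cites Houston and remarks that ``the proof is based on the analysis of the image computing spectral sequence associated to the multiple point spaces''. Your approach is exactly this one, so at the level of strategy you are in agreement with the reference.

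There is, however, a genuine gap in your treatment of the range $k>d(f)$. Your claim that ``for $k>d(f)+1$ both spaces are empty'' is false in general: whenever $s(f)>d(f)+1$ (e.g.\ $s$ lines through the origin in $\CC^2$ with $s\ge 4$, where $d=2$), the spaces $D^k(f)$ and $D^k(F)$ contain the $k$-tuples of pairwise distinct points of $S$ for every $k\le s(f)$, while $D^k(f_t)=\emptyset$ for $k>d(f)$. Hence the $E^1$-page has nonzero columns at $p=k-1$ for all $d(f)<k\le s(f)$, sitting in degree $q=0$, and the sequence does \emph{not} collapse on a single antidiagonal. Correspondingly, your ``direct count'' at $k=d(f)+1$ is also off: the alternating part of $H_0\bigl(D^{d+1}(F)\bigr)$ has dimension $\binom{s}{d+1}$, not $\binom{s-1}{d}$, and these disagree as soon as $s>d+1$. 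The number $\binom{s-1}{d}$ in Houston's definition is precisely the alternating sum $\bigl|\sum_{l=d+1}^{s}(-1)^l\binom{s}{l}\bigr|$, which packages the net contribution of the entire tail $k=d+1,\dots,s$ after the $d_1$-differentials among those columns have done their work. So to complete the argument you must analyse those differentials (or argue via Euler characteristics) rather than discard the columns beyond $k=d(f)+1$.

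A smaller point: in the multi-germ case $D^k(F)$ is not contractible but a disjoint union of contractible pieces, so statements like ``$D^k(F)$ is contractible'' should be replaced by ``each component of $D^k(F)$ is contractible'', and the long exact sequence of the pair has to be read componentwise.
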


The proof is based on the analysis of the image computing spectral sequence associated to the multiple point spaces. Moreover, the above equality can be taken as a definition when we consider the more general situation of a germ $f\colon(\CC^n,S)\to(\CC^{p},0)$, with $p\ge n+1$. In that case, $\mu_I(f)$ can be also interpreted in terms of the homology of the distenganglement of $f$ (see \cite[Remark 3.12]{Houston2010} for details).

On the other hand, the following result, due to Wall, will be crucial. Suppose $g\colon\CCzero{n+1}\rightarrow\CCzero{ }$ has isolated singularity at $0$. Let $U=\OO_{n+1}/J_g$ be the Milnor algebra, where $J_g$ is the Jacobian ideal, generated by the partial derivatives $\partial g/\partial y_i$, $1\le i\le n+1$. Denote by $X_t=g^{-1}(t)\cap B_\epsilon$ the Milnor fiber, where $0<\delta\ll\epsilon\ll 1$ and $0<|t|<\delta$. We assume $G$ is a finite group of automorphisms of $\CCzero{n+1}$ that leaves $g$ invariant. This implies that we have induced actions of $G$ on $X_t$ and on $U$.

\begin{theorem}[see {\cite[Theorem of page 170]{Wall1980a}}]\label{wall iso}
With the above notation, we have an isomorphism of  $\CC G$-modules
$$ H^n(X_t;\CC)\cong U \otimes_\CC \Lambda^{n+1}(\CC^{n+1})^*,$$
where $\Lambda^{n+1}(\CC^{n+1})^*$ is the $(n+1)$th exterior power of the dual $(\CC^{n+1})^*$.
\end{theorem}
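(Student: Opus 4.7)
My plan is to deduce this from Brieskorn's description of the cohomology of the Milnor fibre of an isolated hypersurface singularity, and then track the $G$-action throughout. First, since $X_t$ is a Stein manifold of complex dimension $n$, its cohomology is computed by the holomorphic de Rham complex. For $g$ with isolated singularity, Brieskorn's theorem provides a canonical $\CC$-linear isomorphism
\[
H^n(X_t;\CC)\;\cong\;\Omega^{n+1}\big/\,dg\wedge\Omega^n,
\]
where $\Omega^k$ denotes the germs at $0$ of holomorphic $k$-forms on $\CC^{n+1}$. Concretely, a class $[\omega]$ is sent to the restriction to $X_t$ of the Gelfand--Leray form $\omega/dg$; the fact that the kernel is exactly $dg\wedge\Omega^n$ uses that $g$ has isolated singularity, through the de Rham lemma for the Koszul complex of the partial derivatives of $g$.

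Next I would rewrite the right hand side algebraically. Every $(n+1)$-form germ is uniquely of the form $h\,dy_1\wedge\dots\wedge dy_{n+1}$ with $h\in\OO_{n+1}$, and the subspace $dg\wedge\Omega^n$ equals $J_g\cdot dy_1\wedge\dots\wedge dy_{n+1}$. Hence, as $\CC$-vector spaces,
\[
\Omega^{n+1}\big/\,dg\wedge\Omega^n\;\cong\;U\otimes_\CC\Lambda^{n+1}(\CC^{n+1})^*,
\]
with $dy_1\wedge\dots\wedge dy_{n+1}$ identified with a generator of $\Lambda^{n+1}(\CC^{n+1})^*$.

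The last step is equivariance. Since $G$ preserves $g$, it also preserves the ideal $J_g$, and therefore acts on $U$ through its action on $\OO_{n+1}$. Under the identification $\Omega^{n+1}\cong\OO_{n+1}\otimes_\CC\Lambda^{n+1}(\CC^{n+1})^*$, the pullback action of $G$ on $(n+1)$-forms decomposes as the pullback on functions tensored with the natural (determinant) action on $\Lambda^{n+1}(\CC^{n+1})^*$; this is precisely why the twist by $\Lambda^{n+1}(\CC^{n+1})^*$ appears and cannot be absorbed into $U$. Since the Brieskorn isomorphism above is natural, it is equivariant with respect to any automorphism of $\CCzero{n+1}$ preserving $g$, so composing with the algebraic identification yields the desired isomorphism of $\CC G$-modules.

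The principal technical obstacle I foresee is making Brieskorn's isomorphism $G$-equivariant in a clean way. The classical proof rests on choices of contracting homotopies for the Koszul complex of $dg$ and on a Milnor representative of $X_t$, neither of which is a priori $G$-equivariant. Since $G$ is finite and we work over $\CC$, the standard Reynolds averaging trick applied to these choices yields equivariant versions at no cost, upgrading the $\CC$-linear Brieskorn isomorphism to a $\CC G$-linear one. Once this step is secured, the remaining identifications are routine bookkeeping.
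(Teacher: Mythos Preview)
The paper does not give a proof of this statement: it is quoted directly from Wall's paper \cite{Wall1980a} and used as a black box in the proof of Lemma~\ref{milnor icis}. So there is no argument in the paper to compare your proposal against.

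That said, your outline is essentially the standard route (and close to Wall's own). The identification $H^n(X_t;\CC)\cong\Omega^{n+1}/(dg\wedge\Omega^n)$ via the Gelfand--Leray form, followed by the tautological $\Omega^{n+1}\cong\OO_{n+1}\otimes_\CC\Lambda^{n+1}(\CC^{n+1})^*$ carrying $dg\wedge\Omega^n$ to $J_g\otimes\Lambda^{n+1}(\CC^{n+1})^*$, is exactly how the twist by the determinant character arises. Your concern about equivariance is the genuine technical point; Wall avoids averaging by working throughout with intrinsically defined objects (the residue pairing and the de~Rham complex of the map $g$), for which naturality under automorphisms preserving $g$ is automatic. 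Your Reynolds-operator fix is also perfectly valid in characteristic zero, so the sketch is sound.
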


Obviously, the same is true if we replace $\CC$ by $\QQ$ and consider homology instead of cohomology.
%
%
%
%
%
%
%
%
%

	We are now able to state and prove the following essential lemma about the structure of the alternating homology of the multiple point spaces:
	
	\begin{lemma}\label{milnor icis}
	Let $f:\CCS{n}\rightarrow \CCzero{n+1}$ be unstable of corank 1, $\eqA$-finite  and which admits a $1$-parameter stable unfolding $F(x,t)=(f_t(x),t)$. Take $f_t$ a stable perturbation of $f$ and $k=2,\dots,d(f)$. Then, $H_{n-k+1}^\Alt(D^k(f_t);\QQ)\ne0$  if and only if $D^k(f)$ is singular. 
Furthermore, if $H_{n-k+1}^\Alt(D^k(f_t);\QQ)\ne0$, then $H_{n-k'+1}^\Alt(D^{k'}(f_t);\QQ)\ne0$ for all $k'=k,\dots,d(f)$.	\end{lemma}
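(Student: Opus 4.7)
The plan is to identify $D^k(f_t)$ with the Milnor fibre of an ICIS. By Marar--Mond, under the corank $1$ and $\eqA$-finiteness hypotheses, each $D^k(f)$ is an ICIS of dimension $n-k+1$ on which $\Sigma_k$ acts by permuting the $k$ slots. Since $f_t$ is stable, $D^k(f_t)$ is smooth, and is exactly the corresponding Milnor fibre: it is $(n-k)$-connected and homotopy equivalent to a wedge of $(n-k+1)$-spheres, with $\Sigma_k$ acting equivariantly on the middle homology. Under this identification the question becomes when the sign-isotypical component of that middle homology vanishes.

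First I would dispatch the easy direction: if $D^k(f)$ is smooth, then the family $\{D^k(f_u)\}$ is analytically trivial, so $D^k(f_t)$ is contractible and $H_{n-k+1}^\Alt(D^k(f_t);\QQ)=0$ for free. For the converse, I would apply Wall's Theorem \ref{wall iso}, in its ICIS form for a group-invariant complete intersection, to compute the $\CC\Sigma_k$-module structure on $H^{n-k+1}(D^k(f_t);\CC)$. Using the corank $1$ divided-difference presentation one embeds $D^k(f)$ in $\CCzero{n-1+k}$ with $\Sigma_k$ permuting the last $k$ coordinates and fixing the first $n-1$; the determinant character of this representation on $(\CC^{n-1+k})^\ast$ is then exactly the sign character, so Wall's isomorphism identifies the sign-isotypical (alternating) part of cohomology with the $\Sigma_k$-invariants of the associated Jacobian-type module. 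Since the class of $1$ always belongs to those invariants, they are non-zero whenever the Jacobian module is, i.e., whenever $D^k(f)$ is singular. A standard base-change argument then transfers the conclusion to $\QQ$-coefficients.

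The main obstacle will be the rigorous application of Wall's theorem outside the strict hypersurface setting. I would handle this either by invoking a Greuel--Hamm-type equivariant generalisation of Wall's theorem to ICIS, or by exploiting the fact that the divided-difference equations carry enough symmetry to run the determinant bookkeeping through an explicit Koszul resolution of the complete intersection. For the \emph{furthermore} clause, once the equivalence is established it suffices to show that singularity of $D^k(f)$ forces singularity of $D^{k'}(f)$ for every $k\le k'\le d(f)$. I would deduce this from the finite forgetful projection $D^{k'}(f)\to D^k(f)$ that drops the last $k'-k$ slots: the fibres over a singular point of $D^k(f)$ cannot consist entirely of reduced smooth points, because the additional divided-difference equations cutting out $D^{k'}(f)$ inside $D^k(f)\times \CCzero{k'-k}$ cannot resolve a pre-existing singularity. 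Hence $D^{k'}(f)$ is singular, and the equivalence proved in the first part of the lemma yields the required non-vanishing of $H_{n-k'+1}^\Alt(D^{k'}(f_t);\QQ)$.
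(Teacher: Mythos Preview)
Your overall strategy is the paper's, but you miss the reason for the hypothesis that $f$ admits a \emph{one-parameter} stable unfolding $F$. Since $F$ is stable, $D^k(F)$ is smooth by Marar--Mond, and $D^k(f)=D^k(F)\cap\{t=0\}$ is then a \emph{hypersurface} with isolated singularity inside the smooth space $D^k(F)$, with Milnor fibre $D^k(f_t)$. So Wall's Theorem \ref{wall iso} applies as stated, with ambient space $V=T_0D^k(F)$; there is no need for an ICIS generalisation or a Koszul resolution. The paper then computes the $\Sigma_k$-action on $\Lambda^{n-k+2}V^*$ by writing down $\Sigma_k$-invariant linear equations for $V\subset\CC^n\times\CC^k$, splitting $V=V_1\oplus V_2$ with $V_1=\{x=0,\ \sum y_i=0\}$, and observing that the generator $(dy_1-dy_2)\wedge\cdots\wedge(dy_{k-1}-dy_k)\wedge\omega_1\wedge\cdots\wedge\omega_\ell$ is alternating. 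Your ``determinant character equals sign character'' is this computation in disguise, but you would still have to carry it out on $T_0D^k(F)$ rather than on $\CC^{n-1+k}$.

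There is also a genuine gap: you do not treat the multi-germ case. When $|S|>1$, the germ $D^k(f)$ lives at many points of $S^k$ and $\Sigma_k$ permutes these components; in particular no component need be fixed by the full $\Sigma_k$, so your embedding ``in $\CCzero{n-1+k}$ with $\Sigma_k$ permuting the last $k$ coordinates'' does not make sense globally. The paper handles this by picking a singular component $D^k_1(f)$, letting $G\le\Sigma_k$ be its stabiliser, producing a $G$-alternating class $v$ by the mono-germ argument, and then averaging $\omega=\sum_i\operatorname{sign}(\sigma_i)\sigma_i v$ over coset representatives $\sigma_i$ taking $D^k_1$ to the other components; one checks $\omega$ is $\Sigma_k$-alternating and non-zero. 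This step is not optional and is absent from your proposal.

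Your argument for the \emph{furthermore} clause is essentially the paper's: the divided differences for $D^{k+1}(f)$ consist of those for $D^k(f)$ together with new equations in new variables, so the Jacobian has block-triangular shape and a rank drop in the old block persists. Your phrasing via the forgetful projection is the same observation; just make the Jacobian argument explicit.
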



	\begin{proof}
	To prove the first part we begin by studying the case $S=\left\{0\right\}$. We use the Marar-Mond criterion, \cite[Theorem 2.14]{Marar1989}. Since $F$ is stable, $D^k(F)$ is smooth and $D^k(f)$ is a hypersurface in $D^k(F)$ with isolated singularity and with Milnor fibre $D^k(f_t)$. Moreover, the symmetric group $\Sigma_k$ leaves invariant the defining equation of $D^k(f)$ in $D^k(F)$. By Theorem \ref{wall iso}, we have an isomorphism of  $\CC \Sigma_k$-modules
$$H^{n-k+1}(D^k(f_t);\CC)\cong U \otimes_\CC \Lambda^{n-k+2}V^*,$$
where $U$ is the Milnor algebra of $D^k(f)$ in $D^k(F)$ and $V=T_0 D^k(F)$ is the tangent space of $D^k(F)$ at the origin. If $D^k(f)$ is singular, then $U\ne0$ and contains the constants. Now we 
will see that these constants, after tensoring with $\Lambda^{n-k+2}V^*$, are contained in the alternating part.


From \cite[Proposition 2.3]{Marar1989} we can take $\Sigma_k$-invariant equations for $D^k(F)$ 
in $\CC^{n}\times\CC^k$. Since $F$ has corank 1, we assume that $D^k(F)$ is embedded in $\CC^{n}\times\CC^k$ with coordinates $x_1,\dots,x_n,y_1,\dots,y_k$ and that $\Sigma_k$ acts by permuting $y_1,\dots,y_k$. It follows that the tangent space $V$ has $\Sigma_k$-invariant linear equations of the form
	\[ a_i(y_1+\dots+y_k)+\sum_{j=1}^n b_{i,j} x_j=0, \textnormal{ for }i=1,\dots,n.
\]
Hence, we can split $V$ as $V=V_1\oplus V_2$, where 
\begin{align*}
V_1&=\left\{x_1=0,\dots,x_n=0,y_1+\dots+y_k=0\right\},\\
V_2&=V\cap\left\{y_i=y_j, 1\leq i<j\leq k\right\}.
\end{align*}
If $\omega_1,\dots,\omega_\ell$ is any basis of $V_2^*$, then
	\[ \lambda=(dy_1-dy_2)\wedge\dots\wedge(dy_{k-1}-dy_k)\wedge \omega_1\wedge\dots\wedge \omega_\ell
\]
generates $\Lambda^{n-k+2}V^*$ and is $\Sigma_k$-alternating. This shows $H^{n-k+1}(D^k(f_t);\CC)$ has non-zero alternating part in the mono-germ case.

Suppose now that $S$ is any finite set. Let $D^k_1(F),\dots,D^k_m(F)$ be the connected components of $D^k(F)$. Each $D^k_i(F)$ is a mono-germ at a point  $\left(z^{(1)},\dots,z^{(k)},0\right)\in S^k\times\{0\}$. We also denote by  $D^k_1(f),\dots,D^k_m(f)$ the connected components of $D^k(f)$ such that $D_i^k(f)\subset D_i^k(F)$ for any $i$. 

As $D^k(f)$ is singular, without loss of generality we can suppose that $D_1^k(f)$ is singular. Assume that $D_1^k(f)$ is a mono-germ at $\left(z^{(1)},\dots,z^{(k)}\right)\in S^k$ and let $G\leq\Sigma_k$ be the stabilizer of this point. By following the same argument as in the mono-germ case, but with $D_1^k(F)$, $D_1^k(f)$ and $G$ instead of $D^k(F)$, $D^k(f)$ and $\Sigma_k$, respectively, we find a non-zero element $v$ in the homology of $D^k_1(f_t)$ which is $G$-alternating.

Now for each $i=1,\dots,m$ we choose a permutation $\sigma_i\in\Sigma_k$ that takes $D_1^k(f_t)$ into $D_i^k(f_t)$. We claim that $\omega=\sum_i \sign(\sigma_i)\sigma_i v$ is a non-zero element in the homology of $D_k(f_t)$ which is alternating. 

Let $\tau\in\Sigma_k$. For each $i=1,\dots,m$, $\tau$ takes $\sigma_i\left(z^{(1)},\dots,z^{(k)}\right)$ into some other $\sigma_{j(i)}\left(z^{(1)},\dots,z^{(k)}\right)$, where $j(i)=1,\dots,m$. We can write $\tau\sigma_i$ as $\tau\sigma_i=\sigma_{j(i)}\left(\sigma_{j(i)}^{-1}\tau\sigma_i\right)$, and $\left(\sigma_{j(i)}^{-1}\tau\sigma_i\right)\in G$. Hence,
\begin{align*}
			\tau\omega&=\tau\sum_i \sign(\sigma_i)\sigma_iv\\
			&=\sum_i \sign(\sigma_i)^2\sign(\tau)\sign(\sigma_{j(i)})\sigma_{j(i)}v\\
			&=\sign(\tau)\sum_i\sign(\sigma_{j(i)})\sigma_{j(i)}v.
	\end{align*}
But if $j(i_1)=j(i_2)$, for some $i_1\ne i_2$, then 
\[g=\left(\tau\sigma_{i_1}\right)^{-1}\left(\tau\sigma_{i_2}\right)=\sigma_{i_1}^{-1}\sigma_{i_2}
\]
 is in $G$ as it fixes $\left(z^{(1)},\dots,z^{(k)}\right)$. We have $\sigma_{i_2}=\sigma_{i_1}g$ and both $\sigma_{i_1}$ and $\sigma_{i_2}$ take $D_1^k(f_t)$ to the same component, which is absurd. Hence, $\tau\omega=\sign(\tau)\omega$.
	 
	 This concludes the proof that if $D^k(f)$ is singular, then $H^{n-k+1}(D^k(f_t);\CC)$ has non-zero alternating part. The converse is obvious: if $D^k(f)$ is smooth then $H^{n-k+1}(D^k(f_t);\CC)=0$, which has no alternating part. 

	For the second part, take $k$ such that $D^k(f)$ is singular. Then $D^k(f)$ is a subspace of $\left((\CC^{n})^k,S^k\right)$, with coordinates $x_i^{(j)}$, with $i=1,\dots,n$ and $j=1,\dots,k$, 
and whose equations are the divided differences, which we represent by $\phi_1,\dots,\phi_{r}$ with $r=(n+1)(k-1)$. Moreover, $D^k(f)$ has codimension $r$ and, 
by the Jacobian criterion, the Jacobian matrix $A$ of the functions $\phi_1,\dots,\phi_{r}$ has rank less than $r$ at some point in $S^k$. 

Now $D^{k+1}(f)$ is defined in $\left((\CC^{n})^{k+1},S^{k+1}\right)$, by adding $n$ new coordinates 
$x_1^{(k+1)},\dots,x_n^{(k+1)}$ and $n+1$ new equations $\phi_{r+1},\dots,\phi_{r+n+1}$. Since the old equations do not depend on the new variables, the Jacobian matrix of $\phi_1,\dots,\phi_{r+n+1}$ is
	\[
	\left(\begin{array}{c|c}
	A & 0 \\\hline
	\ast & B\\
	\end{array}\right),
\]
where $B$ is the Jacobian matrix of the new equations with respect to the new variables. Obviously, this matrix has rank $<r+n+1$ at some point in $S^{k+1}$ and thus, $D^{k+1}(f)$ is also singular, since it has codimension $r+n+1$. We can proceed recursively for $D^{k'}(f)$, with $k\leq k'\leq d(f)$.
%
%
%
%
%
%
%
%
	\end{proof}

Let $f\colon\CCS{n}\rightarrow \CCzero{n+1}$ be $\eqA$-finite. Take $F$ be a stable unfolding and choose $G\colon(\CC^{n+1}\times\CC^r,0)\to(\CC,0)$ such that $G(y,u)=0$ is a reduced equation of the image of $F$. The relative Jacobian ideal is the ideal $J_y(G)$ generated by the partial derivatives of $G$ with respect to the variables $y_1,\dots,y_{n+1}$.

\begin{lemma} We have:
\[
\mu_I(f)=0\Longleftrightarrow G\in \sqrt{J_y(G)}.
\]
\end{lemma}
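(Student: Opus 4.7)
The plan is to deploy the analytic Hilbert Nullstellensatz together with Theorem \ref{siersma}. By the Nullstellensatz, $G\in\sqrt{J_y(G)}$ is equivalent to the inclusion of analytic germs $V(J_y(G))\subset V(G)=\mathcal X$ at the origin of $(\CC^{n+1+r},0)$; geometrically, this says that for every $u$ close to $0$ the critical set of $g_u$ (with respect to the $y$-variables) is contained in $X_u$. On the other hand, since $F$ is stable, $G$ is topologically trivial over a Milnor sphere, so Theorem \ref{siersma} gives, for any stable perturbation $f_u$,
\[
\mu_I(f)=\beta_n(X_u)=\sum_{z\in B_\epsilon\setminus X_u}\mu(g_u;z),
\]
whence $\mu_I(f)=0$ if and only if $g_u$ has no critical points in $B_\epsilon\setminus X_u$ for every stable perturbation $u$.

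The backward implication $(\Leftarrow)$ is then immediate: if $V(J_y(G))\subset V(G)$, any $z\in B_\epsilon\setminus X_u$ with $u\in W:=\mathring B_\rho\setminus B(F)$ satisfies $G(z,u)\neq 0$, hence $(z,u)\notin V(J_y(G))$ and $\nabla_y G(z,u)\neq 0$; thus $g_u$ has no critical points outside $X_u$, and the formula above yields $\mu_I(f)=0$.

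For the forward implication $(\Rightarrow)$, assume $\mu_I(f)=0$. Then for every $u\in W$ one has $V(J_y(G))\cap(B_\epsilon\times\{u\})\subset V(G)$, and the key step is to promote this to the full germ inclusion $V(J_y(G))\subset V(G)$ at the origin, after which the Nullstellensatz delivers $G\in\sqrt{J_y(G)}$. I would argue this by irreducible decomposition and projection: each component $Z$ of $V(J_y(G))$ has $\dim Z\ge r$ (being cut out by the $n+1$ equations $\partial_{y_i}G=0$ in $(\CC^{n+1+r},0)$), and by Remmert's theorem $\pi(Z)\subset(\CC^r,0)$ is an analytic germ; if $Z\not\subset V(G)$ and $\pi(Z)=(\CC^r,0)$, a generic $u\in W\cap\pi(Z)$ produces a critical point of $g_u$ outside $X_u$ for a stable perturbation, contradicting the previous paragraph. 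The main obstacle I foresee is the pathological case $\pi(Z)\subsetneq(\CC^r,0)$, in which $Z$ projects entirely into $B(F)$ and thus avoids stable perturbations altogether; ruling this out should require exploiting the stability of $F$---for instance, by observing that a Morse critical point of $g_{u_0}$ lying outside $X_{u_0}$ persists under small variations of $u_0$ by the implicit function theorem, so that $\pi(Z)$ is forced to contain a neighbourhood of $u_0$ and therefore to intersect $W$.
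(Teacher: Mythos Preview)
Your backward implication and the overall strategy via the Nullstellensatz match the paper. The difference is entirely in the forward implication.

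The paper argues the contrapositive in one line, without ever needing $u$ to be a stable parameter. If $G\notin\sqrt{J_y(G)}$, pick any $(y,u)$ with $y$ a critical point of $g_u$ and $g_u(y)\ne 0$; then the conservation formula (Theorem~\ref{conservation}), valid for \emph{every} small $u$, gives
\[
\mu_I(f)\;\ge\;\beta_n(X_u)\;=\!\!\sum_{z\in B_\epsilon\setminus X_u}\!\!\mu(g_u;z)\;\ge\;1.
\]
The middle equality is the Siersma-type formula already recorded in Section~\ref{sect:conservation}. Since conservation supplies the inequality $\mu_I(f)\ge\beta_n(X_u)$ at this particular $u$, there is no need to push $u$ into $W=\mathring B_\rho\setminus B(F)$, and your component-by-component analysis of $V(J_y(G))$ becomes unnecessary.

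As for your proposed fix in the pathological case $\pi(Z)\subsetneq(\CC^r,0)$: the Morse argument cannot work as stated. Precisely in that case one has $\dim Z\ge r>\dim\pi(Z)$, so every fibre $Z\cap\{u=u_0\}$ is positive-dimensional; in other words $y_0$ is a \emph{non-isolated} critical point of $g_{u_0}$, certainly not Morse, and the implicit function theorem yields nothing. Replacing ``Morse'' by ``isolated with conserved Milnor number'' does not help either, since isolatedness is exactly what fails here. The paper's appeal to Theorem~\ref{conservation} dissolves the whole difficulty in one stroke.
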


\begin{proof} We follow the notation of Section 2. If $G\in\sqrt{J_y(G)}$ then $V\left(J_y(G)\right)\subseteq V(G)$. Hence, for any $(y,u)$ such that $y$ is a singular point of $g_u$, we have $g_u(y)=0$. In particular, for $u\notin B(F)$,
\[
\mu_I(f)=\beta_n(X_u)=\sum_{y\in B_\epsilon\setminus X_u}\mu(g_u;y)=0.
\]

Conversely, if $G\notin\sqrt{J_y(G)}$, then $V(J_y(G))\not\subseteq V(G)$. Hence, there exists $(y,u)$ such that $y$ is a singular point of $g_u$ and $g_u(y)\ne0$. This gives
\[
\mu_I(f)\ge \beta_n(X_u)=\sum_{y\in B_\epsilon\setminus X_u}\mu(g_u;y)\ge 1.
\]
\end{proof}
%
%
%
%
%
	
\begin{lemma}\label{segundolema}
Let $h\colon\CCS{n}\rightarrow \CCzero{n+1}$ be $\eqA$-finite and let $f$ be any unfolding of $h$ which is also $\eqA$-finite. If $\mu_I(f)>0$ then $\mu_I(h)>0$.
\end{lemma}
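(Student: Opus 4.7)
The plan is to argue by contrapositive: assume $\mu_I(h)=0$ and derive $\mu_I(f)=0$. The decisive observation is that a single stable unfolding can play the role of ``stable unfolding'' for both $h$ and $f$ at the same time. Writing $f(x,u)=(h_u(x),u)$ with $h_0=h$ and using that $f$ is $\eqA$-finite, I would pick a stable unfolding $F(x,u,v)=(\tilde h_{u,v}(x),u,v)$ of $f$, where $v\in\CC^s$. Since $\tilde h_{0,0}=h$, this same $F$ is also a stable unfolding of $h$, now viewed as an unfolding in the $r+s$ parameters $(u,v)$.

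Now let $G(y,u,v)=0$ be a reduced equation for the image of $F$. The preceding lemma, applied to $h$ with the stable unfolding $F$, characterises
\[
\mu_I(h)=0\Longleftrightarrow G\in\sqrt{J_y(G)},
\]
where $J_y(G)$ is generated by $\partial G/\partial y_1,\dots,\partial G/\partial y_{n+1}$. Applying the same lemma to $f$, whose target has coordinates $(y,u)\in\CC^{n+1}\times\CC^r$, and with the same $F$ (now regarded as an unfolding of $f$ with only the $v$-parameters), one obtains
\[
\mu_I(f)=0\Longleftrightarrow G\in\sqrt{J_{y,u}(G)},
\]
where $J_{y,u}(G)$ is generated by all the partials $\partial G/\partial y_i$ and $\partial G/\partial u_j$.

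Finally, the trivial inclusion $J_y(G)\subseteq J_{y,u}(G)$ gives $\sqrt{J_y(G)}\subseteq\sqrt{J_{y,u}(G)}$, hence $\mu_I(h)=0$ forces $\mu_I(f)=0$, which is the contrapositive of the claim. The only point that really needs checking is that a stable unfolding of $f$ is automatically a stable unfolding of $h$, and this is immediate from the definition of unfolding, since setting all parameters to zero in $F$ recovers $h$. Once that is granted, the argument reduces to a one-line comparison of Jacobian ideals, so I do not anticipate any technical obstacle.
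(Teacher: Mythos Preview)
Your argument is correct and is essentially identical to the paper's proof: both take a stable unfolding $F$ of $f$, observe it is also a stable unfolding of $h$, and then use the characterisation $\mu_I=0\Leftrightarrow G\in\sqrt{J_{\text{target}}(G)}$ together with the inclusion $J_y(G)\subseteq J_{y,u}(G)$ to conclude by contrapositive. The only difference is cosmetic (the paper calls the unfolding parameter of $f$ by $v$ rather than $u$).
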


\begin{proof}
Assume that $f(x,v)=(h_v(x),v)$ and denote by $(y,v)$ the coordinates of $f$ in the target. Let $F$ be a stable unfolding of $f$. If $\mu_I(h)=0$, then $G\in\sqrt{J_y(G)}\subseteq \sqrt{J_{y,v}(G)}$, so $\mu_I(f)=0$.
\end{proof}	


Let $f\colon\CCS{n}\rightarrow \CCzero{n+1}$ be $\eqA$-finite and assume that either $(n,n+1)$ are nice dimensions or $f$ has corank 1. 
Here we prove the following weak version of the Mond's Conjecture in the corank 1 case (see the introduction for the original version of the conjecture).

\begin{theorem}[Weak Mond's Conjecture]\label{wmc}
Let $f\colon\CCS{n}\rightarrow \CCzero{n+1}$ be $\eqA$-finite of corank 1. Then $\mu_I(f)=0$ if and only if $f$ is stable.
\end{theorem}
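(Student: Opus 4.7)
The easy direction should be immediate: if $f$ is stable, I would take the constant unfolding $F(x,u)=(f(x),u)$ as a stable unfolding of $f$. Its bifurcation set is empty, so the disentanglement coincides with the image of $f$ itself, which is contractible at the origin (it is the cone on its intersection with a Milnor sphere). Hence $\mu_I(f)=0$.

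For the converse, assume $\mu_I(f)=0$. The strategy is to combine Houston's decomposition (Proposition \ref{mu}) with Lemma \ref{milnor icis}. Since $\mu_I(f)=\sum_k\mu_k^{\Alt}(f)$ is a sum of non-negative integers, every $\mu_k^{\Alt}(f)$ must vanish, and I will translate each such vanishing into geometric information on the multiple point space $D^k(f)$. The boundary term $k=d(f)+1$ is purely combinatorial: by the simplification recorded after Definition \ref{mu k}, $\mu_{d(f)+1}^{\Alt}(f)=\binom{s(f)-1}{d(f)}$, which is strictly positive whenever $s(f)>d(f)$. Its vanishing therefore forces $s(f)\le d(f)$, in which case this term is trivially zero by the ``otherwise'' clause.

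The heart of the argument is the range $2\le k\le d(f)$. For a stable unfolding $F$, each $D^k(F)$ is smooth and hence germwise contractible, so the long exact sequence of the pair $(D^k(F),D^k(f_t))$ identifies
\[
\mu_k^{\Alt}(f)=\dim_{\QQ}H^{\Alt}_{n-k+1}(D^k(f_t);\QQ).
\]
The vanishing of this number combined with the contrapositive of Lemma \ref{milnor icis} gives that $D^k(f)$ is smooth. For $k=1$, $D^1(f)=(\CC^n,S)$ is trivially smooth, and for $k>d(f)$ I would show that $D^k(f)=\emptyset$: fixing a one-parameter stabilisation $F(x,t)=(f_t(x),t)$, the space $D^k(F)$ is smooth of pure dimension $n-k+2$ if non-empty; but its generic $t$-fibre $D^k(f_t)$ is empty by definition of $d(f)$, forcing $D^k(F)$ to be concentrated over $\{t=0\}$, a contradiction with pure-dimensionality unless $D^k(F)=\emptyset$.

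Having established that every $D^k(f)$ is either empty or smooth of the expected dimension $n-k+1$, I would close the argument by invoking the Marar--Mond criterion for stability in corank $1$ (cf. \cite[Theorem 2.14]{Marar1989}) to conclude that $f$ is stable. The real obstacle in this plan sits in Lemma \ref{milnor icis}, which is the deep input that converts an algebraic vanishing of alternating homology into geometric smoothness of $D^k(f)$, via the delicate symmetric-group analysis on Wall's description of the top cohomology of the Milnor fibre of the ICIS $D^k(f)\subset D^k(F)$. Once that lemma is in hand, the remainder of the proof is essentially a bookkeeping exercise over the admissible ranges of $k$.
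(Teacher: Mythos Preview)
Your overall strategy---Houston's decomposition (Proposition~\ref{mu}) together with Lemma~\ref{milnor icis} and the Marar--Mond criterion---is exactly the paper's, just run in the direct rather than the contrapositive direction. Two technical points, however, are genuine gaps.

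\textbf{The case $k=n+1$.} Your identification
\[
\mu_k^{\Alt}(f)=\dim_{\QQ}H^{\Alt}_{n-k+1}\bigl(D^k(f_t);\QQ\bigr)
\]
via the long exact sequence of the pair is only valid for $k\le n$: when $k=n+1$ the relative group sits in degree~$1$, and the exact sequence now involves $H_0^{\Alt}\bigl(D^{n+1}(F)\bigr)$, which is \emph{not} zero for a multi-germ (each connected component of $D^{n+1}(F)$ contributes). Thus $\mu_{n+1}^{\Alt}(f)$ is only the \emph{kernel} of $H_0^{\Alt}\bigl(D^{n+1}(f_t)\bigr)\to H_0^{\Alt}\bigl(D^{n+1}(F)\bigr)$, and you cannot invoke Lemma~\ref{milnor icis} directly. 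The paper treats $k=n+1$ by a separate splitting-of-points argument producing a non-trivial kernel element whenever $D^{n+1}(f)$ is non-reduced; your plan needs the same patch. (Relatedly, your pure-dimension argument for $D^k(f)=\emptyset$ when $k>d(f)$ only bites when $n-k+2\ge 1$, so $k\ge n+2$ must be handled via the projections $D^{k+1}\to D^k$ or the $s(f)\le d(f)$ bound, as the paper does.)

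\textbf{The hypothesis of Lemma~\ref{milnor icis}.} The lemma is stated for germs admitting a \emph{one-parameter stable unfolding}; its proof uses Wall's theorem, which needs $D^k(f)$ to sit as a \emph{hypersurface} in the smooth space $D^k(F)$. A stabilisation is not in general stable, and a minimal stable unfolding has $r=\eqA_e\text{-}\codim(f)$ parameters, so for $r\ge 2$ the lemma does not apply as stated. The paper circumvents this by working in the contrapositive: starting from an unstable $f$, it takes a generic hyperplane section of a minimal stable unfolding to produce an $\eqA$-finite unfolding $F_0$ of $f$ that \emph{does} admit a one-parameter stable unfolding, applies Lemma~\ref{milnor icis} to $F_0$, and then transfers $\mu_I(F_0)>0$ back to $\mu_I(f)>0$ via Lemma~\ref{segundolema}. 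This reduction is asymmetric---Lemma~\ref{segundolema} propagates positivity of $\mu_I$ \emph{from} an unfolding \emph{to} the base, not the other way---so it does not slot into your direct argument. You would need either an ICIS version of Wall's equivariant theorem or to revert to the contrapositive for this step.
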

\begin{proof}
Obviously, $\mu_I(f)=0$ when $f$ is stable. Assume that $f$ is not stable. 
If $s(f)>d(f)$ we know that $d(f)=n+1$, and also that $\mu_{n+2}^\Alt(f)>0$. Hence, we can suppose that $s(f)\leq d(f)$.

By the Marar-Mond criterion either $D^k(f)$ is singular for some $k=2,\dots,d(f)$ or $D^k(f)$ is a $k$-tuple of points of $S$ for some $k\ge n+2$. We suppose first that $D^k(f)$ is singular, for some $k<n+1$. 

If $f$ admits a 1-parameter stable unfolding $F(x,t)=(f_t(x),t)$, then $H_{n-k+1}\left(D^k(f_t)\right)$ has non-zero alternating part for $t\ne0$, by Lemma \ref{milnor icis}. Since $D^k(F)$ is contractible and $k<n+1$, it follows from the exact sequence of the pair $\left(D^k(F),D^k(f_t)\right)$ that
\[
H^\Alt_{n-k+2}\left(D^k(F),D^k(f_t);\QQ\right) \cong H^\Alt_{n-k+1}\left(D^k(f_t);\QQ\right),
\]
so $\mu_k^\Alt(f)>0$.

If $f$ does not admit a 1-parameter stable unfolding, then we consider a minimal stable unfolding $F$. By taking a generic section on the parameter space, we get a finitely determined germ $F_0$ which is an unfolding of $f$ and which admits the 1-parameter stable unfolding $F$. Now $\mu_I(F_0)>0$ by the above argument and hence also $\mu_I(f)>0$ by Lemma \ref{segundolema}.

The next case to consider is when $D^{n+1}(f)$ is singular. Again, we use the exact sequence of the pair $\left(D^k(F),D^k(f_t)\right)$, but in this case 
\[
H_{1}^\Alt \left(D^{n+1}(F),D^{n+1}(f_t);\QQ\right)
\]
is isomorphic to the kernel of the mapping 
\begin{equation}\label{inclusion}
H_{0}^\Alt \left(D^{n+1}(f_t);\QQ\right)\longrightarrow H_{0}^\Alt \left(D^{n+1}(F);\QQ\right)
\end{equation}
induced by the inclusion. Take a singular 0-dimensional component of $D^{n+1}(f)$, with multiplicity $m>1$. Such component will split into $m$ distinct points in $D^{n+1}(f_t)$, which correspond to $m$ distinct generators of $H_{0}^\Alt \left(D^{n+1}(f_t);\QQ\right)$. But these $m$ points are in the same connected component of $D^{k+1}(F)$, for $F(x,t)=(f_t(x),t)$. Hence, we get a non-trivial element of the kernel of \eqref{inclusion} and thus $\mu_{n+1}^\Alt(f)>0$.

Finally, it only remains to consider the case where $D^{n+1}(f)$ is smooth but $D^k(f)$ is a $k$-tuple of points of $S$ for some $k\ge n+2$. Since $s(f)\le d(f)$, $D^{n+1}(f)$ necessarily must contain a point $\left(x^{(1)},\dots,x^{(n+1)}\right)$ such that $x^{(i)}=x^{(j)}$ for some $i\ne j$, as the projections from the previous $D^k(f)$ to this $D^{n+1}(f)$ cover all the possible points in the last space and we have less than $n+2$ points in $S$. This point will also split into several distinct points  in $D^{n+1}(f_t)$, which is not possible if $D^{k+1}(f)$ is smooth. We deduce that this case cannot occur when  $s(f)\le d(f)$.

\end{proof}


\begin{note}
The proof of Theorem \ref{wmc} is inspired in the proof of \cite[Proposition 4.4]{Cooper2002}. Here it is proved that a corank $1$ mono-germ of $\eqA_e$-codimension $1$ has image Milnor number equal to $1$, based on the same result of Wall (Theorem \ref{wall iso}).
\end{note}

The following corollary can be deduced easily from 
 Lemma \ref{milnor icis}, Theorem \ref{wmc} and their proofs and it gives a sharper estimate of $\mu_I(f)$ when $f$ is unstable.

\begin{corollary}
Let $f\colon\CCS{n}\rightarrow \CCzero{n+1}$ be $\eqA$-finite of corank 1 and unstable.
Assume $H_{n-k+1}(D^k(f_t);\QQ)$ has non-zero alternating part for some $k$:
\begin{enumerate}[label={(\roman*)}]
\item \label{it1} If $s(f)\leq d(f)$ then $\mu_I(f)\geq d(f)-k+1$.
\item \label{it2} If $s(f)>d(f)$  then $\mu_I(f)\geq d(f)-k+1+{s(f)-1 \choose d(f)}$.
\end{enumerate}
In case \ref{it1} there always exists such a $k$ and in case \ref{it2} $d(f)$ has to be equal to $n+1$ and such a $k$ could not exist.
\end{corollary}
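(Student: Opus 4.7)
The plan is to leverage Proposition \ref{mu}, which writes $\mu_I(f)=\sum_{k'}\mu_{k'}^\Alt(f)$, and to extract one unit of contribution to $\mu_{k'}^\Alt(f)$ for each $k'\in\{k,\dots,d(f)\}$, adding in case \ref{it2} the extra contribution from $\mu_{d(f)+1}^\Alt(f)$.

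First, the second part of Lemma \ref{milnor icis} immediately propagates the hypothesis: $H_{n-k'+1}^\Alt(D^{k'}(f_t);\QQ)\ne 0$ for every $k'\in\{k,\dots,d(f)\}$. Next, to convert each such nonvanishing into $\mu_{k'}^\Alt(f)\ge 1$, I would feed it into the $\Sigma_{k'}$-equivariant long exact sequence of the pair $(D^{k'}(F),D^{k'}(f_t))$, exactly as in the proof of Theorem \ref{wmc}. For $k'<n+1$, contractibility of the smooth germ $D^{k'}(F)$ provides the connecting isomorphism
\[
H_{n-k'+2}^\Alt(D^{k'}(F),D^{k'}(f_t);\QQ)\cong H_{n-k'+1}^\Alt(D^{k'}(f_t);\QQ)\ne 0.
\]
For $k'=n+1$ (which can only occur when $d(f)=n+1$), the same long exact sequence shows that $H_1^\Alt(D^{n+1}(F),D^{n+1}(f_t);\QQ)$ surjects onto $H_0^\Alt(D^{n+1}(f_t);\QQ)\ne 0$, because $D^{n+1}(F)$ is connected and the trivial $\Sigma_{n+1}$-representation on $H_0(D^{n+1}(F);\QQ)$ has no alternating part whenever $n+1\ge 2$. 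Either way $\mu_{k'}^\Alt(f)\ge 1$, and summing yields $\mu_I(f)\ge d(f)-k+1$, which settles \ref{it1}. Case \ref{it2} follows by adding $\mu_{d(f)+1}^\Alt(f)={s(f)-1\choose d(f)}$, using that $s(f)>d(f)$ forces $d(f)=n+1$ by the lemma preceding Proposition \ref{mu}.

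For the final assertion, under the hypothesis $s(f)\le d(f)$ of \ref{it1} the proof of Theorem \ref{wmc} shows via the Marar-Mond criterion that some $D^k(f)$ with $2\le k\le d(f)$ must be singular, so the first part of Lemma \ref{milnor icis} produces the required $k$. In case \ref{it2} a suitably generic multi-germ of $s(f)\ge n+2$ immersions in general position has all its multiple point spaces smooth, so the hypothesis may fail even though $f$ is unstable; the conclusion $\mu_I(f)>0$ is nevertheless kept alive by the binomial term. The main obstacle I anticipate is the borderline step $k'=n+1$, where the direct boundary isomorphism used for $k'<n+1$ is not available; one has to supplement it with the vanishing $H_0^\Alt(D^{n+1}(F);\QQ)=0$ in order to extract the surjection onto the nontrivial $H_0^\Alt(D^{n+1}(f_t);\QQ)$.
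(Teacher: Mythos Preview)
Your overall plan is the paper's intended one: it says only that the corollary follows from Lemma \ref{milnor icis}, Theorem \ref{wmc} and their proofs, and your use of the propagation in the second half of Lemma \ref{milnor icis} together with the long exact sequence of the pair is exactly the route. For $k'\le n$ your argument is fine.

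The gap is your treatment of $k'=n+1$. The assertion that $D^{n+1}(F)$ is connected is unjustified for multi-germs: $D^{n+1}(F)$ is a germ at several points of $S^{n+1}\times\{0\}$ and typically has many components, with $\Sigma_{n+1}$ permuting them. In particular $H_0^\Alt(D^{n+1}(F);\QQ)$ need not vanish. For a concrete failure take $n=1$ and $f$ three generic lines through the origin in $\CC^2$: then $D^2(F)$ consists of six smooth arcs swapped in pairs by $\Sigma_2$, so $H_0^\Alt(D^2(F);\QQ)\cong\QQ^3$; here $H_0^\Alt(D^2(f_t);\QQ)\cong\QQ^3\ne 0$ as well, yet $\mu_2^\Alt(f)=0$. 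So the implication ``$H_0^\Alt(D^{n+1}(f_t))\ne 0\Rightarrow\mu_{n+1}^\Alt(f)\ge 1$'' that you need is simply false in general.

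The fix is to follow the proof of Theorem \ref{wmc} more closely at this step. The Jacobian argument in the proof of Lemma \ref{milnor icis} actually propagates \emph{singularity} of $D^k(f)$, not merely nonvanishing of $H^\Alt$; so starting from a $k$ with $D^k(f)$ singular one gets $D^{n+1}(f)$ singular. Then, as in the $k=n+1$ case of Theorem \ref{wmc}, a singular $0$-dimensional component of $D^{n+1}(f)$ splits into several points of $D^{n+1}(f_t)$ all lying in one component of $D^{n+1}(F)$, which (after alternating over the orbit as in Lemma \ref{milnor icis}) yields a non-trivial element in the kernel of $H_0^\Alt(D^{n+1}(f_t);\QQ)\to H_0^\Alt(D^{n+1}(F);\QQ)$, hence $\mu_{n+1}^\Alt(f)\ge 1$. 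Note that this repair uses the hypothesis in the stronger form ``$D^k(f)$ is singular'', which by the first part of Lemma \ref{milnor icis} is equivalent to your hypothesis when $k\le n$; the example above shows the equivalence can break at $k=n+1$.
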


A straightforward consequence of the weak Mond's conjecture is about the dimension of the relative Jacobian module of $f$ considered in \cite{Bobadilla2019}. It is defined as
\[
M_y(G)=\frac{J(G)+(G)}{J_y(G)},
\]
where $G\colon(\CC^{n+1}\times\CC^r,0)\to(\CC,0)$ is a function such that $G(y,u)=0$ is a reduced equation of the image of a stable unfolding of $f$. It is not difficult to see that the dimension of $M_y(G)$ is always $\le r$ when $f$ is $\eqA$-finite. Moreover, it is shown in \cite[Theorem 6.1]{Bobadilla2019} that the Mond's conjecture holds for $f$ when $M_y(G)$ is Cohen-Macaulay of dimension $r$.

\begin{corollary} Let $f\colon\CCS{n}\rightarrow \CCzero{n+1}$ be $\eqA$-finite of corank 1 and unstable. Then $M_y(G)$ has dimension $r$.
\end{corollary}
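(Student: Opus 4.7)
The plan is to combine Theorem \ref{wmc} with the lemma characterizing $\mu_I(f)=0$: since $f$ is unstable of corank $1$, the weak Mond's conjecture gives $\mu_I(f)>0$, and hence $G\notin\sqrt{J_y(G)}$. Equivalently, $V(J_y(G))\not\subseteq V(G)$ as germs at the origin, so there is at least one irreducible component of $V(J_y(G))$ on which $G$ does not vanish identically.

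The next step is to isolate a submodule of $M_y(G)$ that sees this component. Since $G\in(G)\subseteq J(G)+(G)$, the class $[G]\in M_y(G)$ is well defined, and the cyclic submodule it generates is isomorphic as an $\OO$-module to $\OO/(J_y(G):G)$. At any point $(y,u)$ where $G(y,u)\neq 0$ the function $G$ is a unit in the local ring, so the colon ideal satisfies $(J_y(G):G)_{(y,u)}=J_y(G)_{(y,u)}$; this shows that the support of $\langle[G]\rangle$ contains $V(J_y(G))\setminus V(G)$, and hence every irreducible component of $V(J_y(G))$ which is not contained in $V(G)$.

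Finally, I would compute the dimension of this support using the geometry already set up in Section \ref{sect:conservation}. Consider the projection $\pi\colon V(J_y(G))\setminus V(G)\to(\CC^r,0)$ onto the parameter space of the stable unfolding. By Theorem \ref{siersma}, for every $u$ outside the bifurcation set $B(F)$ the fiber $\pi^{-1}(u)$ is precisely the set of critical points of $g_u$ lying in $B_\epsilon\setminus X_u$, which is finite of cardinality $\beta_n(X_u)=\mu_I(f)$, and by the weak Mond's conjecture this cardinality is strictly positive. Hence $\pi$ is dominant with finite generic fibers, so the locally closed set $V(J_y(G))\setminus V(G)$ has dimension exactly $r$. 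Therefore $\dim M_y(G)\ge\dim\langle[G]\rangle\ge r$, and combining with the inequality $\dim M_y(G)\le r$ recalled immediately before the statement we conclude $\dim M_y(G)=r$.

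No step looks genuinely delicate since each is geometric and leans on tools already established; the point that most warrants care is checking that the colon ideal really localizes to $J_y(G)$ wherever $G$ is a unit, so that the support computation for the cyclic submodule $\langle[G]\rangle$ picks up the full $r$-dimensional component produced by the weak conjecture rather than being annihilated by some embedded phenomenon.
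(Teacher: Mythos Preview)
Your argument is correct and takes a genuinely different route from the paper's. The paper's proof is a two-line application of an external result: it quotes \cite[Theorem 6.1]{Bobadilla2019} to identify $\mu_I(f)$ with the Samuel multiplicity $e_{\mathcal O_r}\big((u_1,\dots,u_r);M_y(G)\big)$, and then invokes the standard fact that a module over $\mathcal O_r$ has positive multiplicity with respect to the maximal ideal if and only if its dimension equals $r$. Since Theorem~\ref{wmc} gives $\mu_I(f)>0$, the conclusion follows.

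Your approach avoids the multiplicity formula entirely and is more self-contained within the paper: you use the earlier lemma characterising $\mu_I(f)=0$ by $G\in\sqrt{J_y(G)}$, isolate the cyclic submodule $\langle[G]\rangle\cong\mathcal O/(J_y(G):G)$, and compute its support geometrically via the projection to parameter space. This buys independence from \cite{Bobadilla2019}, at the cost of a longer argument. One small slip: the fibre $\pi^{-1}(u)$ need not have \emph{cardinality} $\mu_I(f)$, since $\mu_I(f)=\sum_y\mu(g_u;y)$ counts critical points with Milnor multiplicity; but you only need the fibre to be finite and nonempty, which is unaffected. The one point that deserves a sentence of care is that the critical points of $g_u$ off $X_u$ really do accumulate at the origin as $u\to 0$ (so that the relevant component of $V(J_y(G))$ is genuinely a germ through $0$); this follows because the count $\mu_I(f)>0$ is already valid for arbitrarily small Milnor balls $B_{\epsilon'}$, with correspondingly small $\rho'$.
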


\begin{proof} It follows from \cite[Theorem 6.1]{Bobadilla2019} that 
\[
\mu_I(f)=e_{\mathcal O_r}\left((u_1,\dots,u_r);M_y(G)\right),
\] 
the Samuel multiplicity of the $\mathcal O_r$-module $M_y(G)$ with respect to the parameter ideal $(u_1,\dots,u_r)$. But it is well known that an $R$-module has multiplicity $>0$ if and only if it has dimension equal to $\dim R$.
\end{proof}



\section{Houston's conjecture on excellent unfoldings}\label{sect:hc}

It is not difficult to see that if we add a new branch to an unstable multi-germ $f\colon(\CC^n,S)\rightarrow(\CC^{n+1},0)$ then its  $\eqA_e$-codimension increases strictly (see for instance \cite[Exercise 3.4.1]{Mond-Nuno2020}). We show the same property for the image Milnor number, instead of the $\eqA_e$-codimension. The idea of the proof is easy to visualize, as we can see in fig.  \ref{fig:AddingBranches}.

\begin{figure}[ht]
	\centering
		\includegraphics[width=0.625\textwidth]{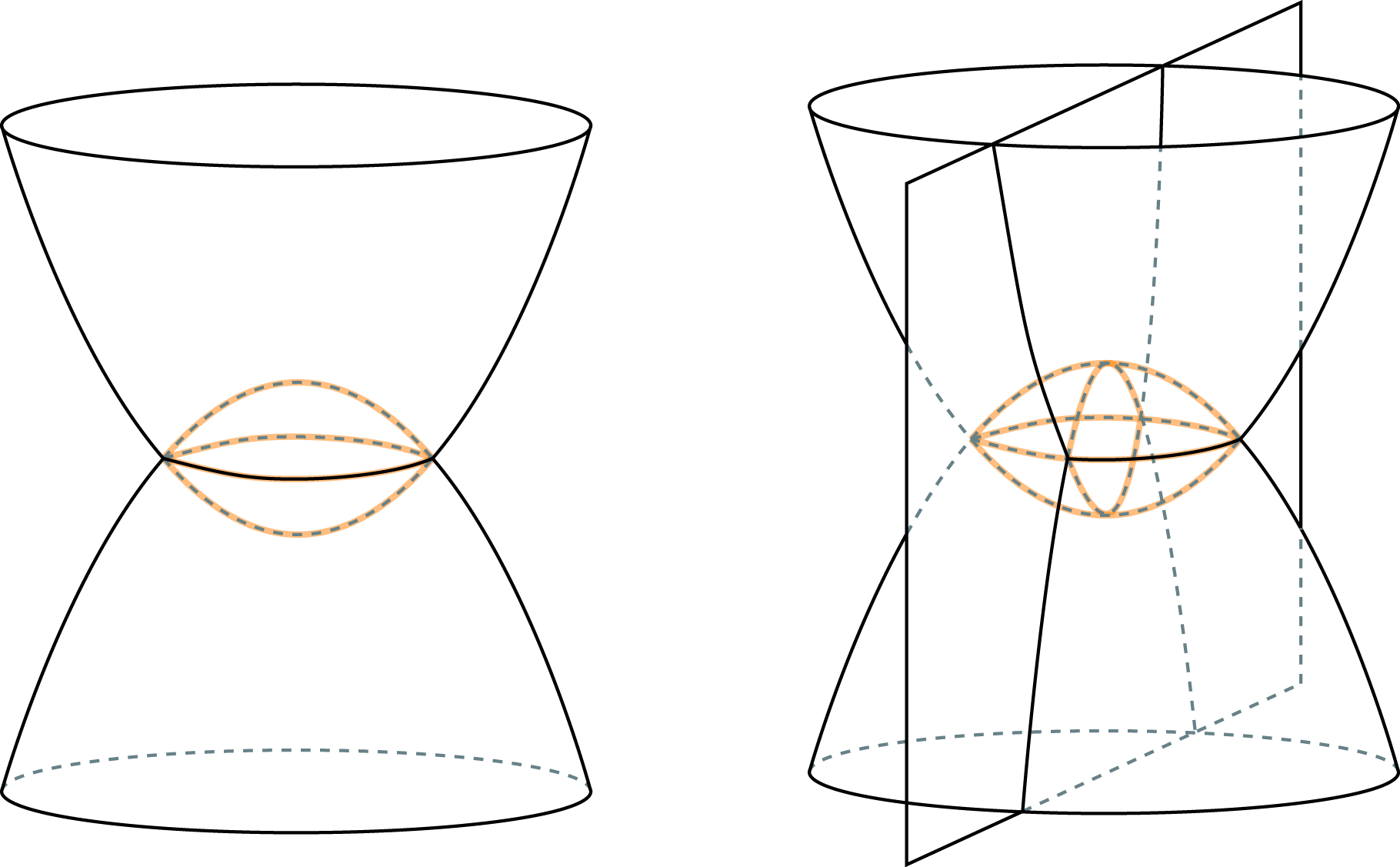}
	\caption{Real representation of the creation of more homology via the addition of more branches. Note that in the complex case this happens in middle dimension.}
	\label{fig:AddingBranches}
\end{figure}

Given two germs $f\colon(\CC^n,S)\rightarrow(\CC^{n+1},0)$ and $g\colon(\CC^n,z)\rightarrow(\CC^{n+1},0)$, we denote by $\{f,g\}\colon\left(\CC^n,S\sqcup \{z\}\right)\rightarrow(\CC^{n+1},0)$ the new multi-germ obtained as the disjoint union of $f$ and $g$. If $f$ and $g$ are both of corank 1 and $\eqA$-finite, then
\[
\mu_k(f)\le \mu_k\left(\{f,g\}\right),
\]
for all $k$, since adding a new branch does not kill the corresponding alternating homology of the $k$-multiple point space because the new branch just adds more connected components disjoint from the ones we had before. By Proposition \ref{mu}, this implies that
\[
\mu_I(f)\le \mu_I(\{f,g\}).
\]
We may have $\mu_I(f)=\mu_I(\{f,g\})$ when $f$ is stable and $g$ is transverse to $f$, so that $\{f,g\}$ is also stable. In the next lemma, we show that if $f$ is unstable, then the inequality is strict.


\begin{lemma}\label{rama extra}
Let $f\colon(\CC^n,S)\rightarrow(\CC^{n+1},0)$ and $g\colon(\CC^n,z)\rightarrow(\CC^{n+1},0)$ be $\eqA$-finite. If $f$ has corank $1$ and $\mu_I(f)>0$ then 
\[
	\mu_I(f)<\mu_I(\left\{f,g\right\}).
\]
\end{lemma}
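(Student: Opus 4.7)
By Proposition~\ref{mu}, both $\mu_I(f)$ and $\mu_I(\{f,g\})$ are sums of alternating Milnor numbers, so it suffices to exhibit a level $k$ at which the alternating Milnor number strictly increases. The key observation is that the source of $h=\{f,g\}$ is the disjoint union $X_f\sqcup X_g$, so every ordered $k$-tuple in $D^k(h_t)$ has a $\Sigma_k$-invariant number $a\in\{0,\dots,k\}$ of entries in the domain of $f_t$. This produces a $\Sigma_k$-invariant decomposition
\[
D^k(h_t)=D^k(f_t)\sqcup D^k(g_t)\sqcup\bigsqcup_{0<a<k}D^k_a(h_t),
\]
which lifts to the stable unfolding of $h$. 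Alternating relative homology splits as a direct sum, giving $\mu_k^\Alt(h)\ge \mu_k^\Alt(f)+\mu_k^\Alt(g)+\sum_{0<a<k}\mu_{k,a}^\Alt(h)$ for each $k$, and hence $\mu_I(h)\ge \mu_I(f)+\mu_I(g)$ plus the total mixed contribution. If $g$ is unstable, Theorem~\ref{wmc} gives $\mu_I(g)>0$ and we are finished, so we may assume $g$ is stable, i.e.\ an immersion.

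In the stable case $D^k(g_t)=\emptyset$ for $k\ge 2$ and any non-empty mixed piece must have $k-a=1$; each connected component of $D^{k+1}_k(h_t)$ is isomorphic to the fibre product $D^k(f_t)\times_{Y}X_{g_t}$, i.e.\ the intersection of $D^k(f_t)$ with the preimage of the smooth hypersurface $g_t(X_{g_t})\subset Y$. By Theorem~\ref{wmc} applied to $f$ there is a $k_0$ with $\mu_{k_0}^\Alt(f)>0$. Following the proof of that theorem, either \textup{(i)}~$D^{k_0}(f)$ is singular at some point $P\in S^{k_0}$ with $2\le k_0\le n+1$, or \textup{(ii)}~$s(f)>d(f)=n+1$ and the binomial term is active.

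In case~\textup{(i)} the plan is to verify that $D^{k_0}(f)\times_Y X_g$ inherits a singularity at $(P,z)$ from the singularity of $D^{k_0}(f)$ at $P$ --- the smooth transverse hypersurface image of $g$ does not resolve it --- and then adapt the alternating-cycle construction from the proof of Lemma~\ref{milnor icis}: by Wall's Theorem~\ref{wall iso}, the top form $(dy_1-dy_2)\wedge\dots\wedge(dy_{k_0-1}-dy_{k_0})\wedge\omega$ used there, wedged with a generator attached to the smooth $g$-factor, remains $\Sigma_{k_0}$-alternating, and $\Sigma_{k_0+1}$-equivariance is automatic because $\Sigma_1$ acts trivially on the added coordinate. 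This yields $\mu_{k_0+1,k_0}^\Alt(h)>0$. In case~\textup{(ii)}, the maximality of $d(f)=n+1$ (together with the Marar--Mond dimension of $D^k(h_t)$) forces $d(h)=n+1=d(f)$, while $s(h)=s(f)+1$, so Pascal's identity gives
\[
\binom{s(h)-1}{n+1}=\binom{s(f)-1}{n+1}+\binom{s(f)-1}{n}>\binom{s(f)-1}{n+1},
\]
i.e.\ $\mu_{n+2}^\Alt(h)>\mu_{n+2}^\Alt(f)$. Finally, if $f$ does not admit a $1$-parameter stable unfolding we reduce to the previous setting via a generic section of a minimal stable unfolding and Lemma~\ref{segundolema}, mirroring the analogous step in the proof of Theorem~\ref{wmc}.

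The main obstacle will be producing the non-zero alternating class in case~\textup{(i)}: checking that the fibre product is an ICIS in a smooth ambient space carrying a compatible $\Sigma_{k_0}\times\Sigma_1$-action, and that the candidate cycle built from the divided differences of $f$ together with the regular defining equation of $g(X_g)$ is genuinely sign-equivariant, requires a careful equivariant revision of the argument underlying Lemma~\ref{milnor icis}.
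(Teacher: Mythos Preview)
Your plan is a genuinely different route from the paper's, and it is considerably more laborious. The paper argues directly at the level of images: by upper semi-continuity (Corollary~\ref{upper}) one may replace $g$ by a generic hyperplane $H$ through the origin; then for a stable perturbation $f_t$ with image $X_t$ the pair $\{f_t,g\}$ is a stable perturbation of $\{f,g\}$ with image $X_t\cup H$, and Mayer--Vietoris gives
\[
\mu_I(\{f,g\})=\mu_I(f)+\mu_I(\tilde f),
\]
where $\tilde f\colon(f^{-1}(H),S)\to(H,0)$ is the slice. The punchline is that $f$ is a one-parameter unfolding of $\tilde f$, so $\tilde f$ cannot be stable (else $f$ would be), and Theorem~\ref{wmc} yields $\mu_I(\tilde f)>0$. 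This avoids the multiple-point machinery entirely. Your fibre product $D^{k_0}(f)\times_Y X_g$ is essentially $D^{k_0}(\tilde f)$, so at the level of multiple points you are rediscovering the slice, but the paper's Mayer--Vietoris shortcut replaces the whole case analysis by a single exact sequence.

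There is also a concrete error in your sketch. You write that once the top form built from the divided differences of $f$ is $\Sigma_{k_0}$-alternating, ``$\Sigma_{k_0+1}$-equivariance is automatic because $\Sigma_1$ acts trivially on the added coordinate.'' This is false: the group acting on $D^{k_0+1}(h_t)$ is the full $\Sigma_{k_0+1}$, not $\Sigma_{k_0}\times\Sigma_1$. A transposition swapping the $g$-slot with an $f$-slot sends your chosen component of $D^{k_0+1}_{k_0}(h_t)$ to a different one, so a class supported on a single component is never $\Sigma_{k_0+1}$-alternating. What you actually need is the induction step from the multi-germ part of the proof of Lemma~\ref{milnor icis}: take the $\Sigma_{k_0}$-alternating class $v$ on one component and form $\sum_i \sign(\sigma_i)\,\sigma_i v$ over coset representatives $\sigma_i$ of $\Sigma_{k_0}$ in $\Sigma_{k_0+1}$; non-vanishing then follows because the components are disjoint. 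You flag this passage as ``the main obstacle'' at the end, but the sentence quoted above suggests the obstacle has not yet been correctly identified. With this fix (and the separate handling of $k_0=n+1$, where $D^{k_0}(f)$ is zero-dimensional and the hyperplane-section argument for singularity breaks down), your approach can be completed, though the paper's is shorter and cleaner.
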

\begin{proof}
By the upper semi-continuity of the image Milnor number (see Corollary  \ref{upper}), we can assume that the image of $g$ is a generic hyperplane $H$ in $\CC^{n+1}$ through the origin. Let $f_t$ be a stable perturbation of $f$ with image $X_t$. Since $H$ is a generic hyperplane, the disjoint union $\{f_t,g\}$ gives a stable perturbation of $\{f,g\}$, with image $X_t\cup H$. 

Furthermore, $X_t\cap H$ is also the image of a stable perturbation of the restriction $\tilde f:\left(f^{-1}(H),S\right)\to(H,0)$. Since $H$ is generic and $f$ is $\eqA$-finite of corank 1, $\left(f^{-1}(H),S\right)$ is smooth and $\tilde f$ is also $\eqA$-finite of corank 1. Moreover, $\tilde f$ cannot be stable because $f$ is a 1-parameter unfolding of $\tilde f$. Hence $\mu_I(\tilde f)>0$, by the weak Mond's conjecture (Theorem \ref{wmc}).

Now, just apply the Mayer-Vietoris sequence:
\[
\xymatrix{0\ar[r] &H_n(X_t)\ar[r] &H_n(X_t\cup H)\ar[r] &H_{n-1}(X_t\cap H)\ar[r] &0},
\]
so
\[
\mu_I(\{f,g\})=\mu_I(f)+\mu_I(\tilde f)>\mu_I(f).
\]

\end{proof}


We recall now the notion of excellent unfolding following Gaffney (cf. \cite[Definition 6.2]{Gaffney1993}). Excellent unfoldings play an important role in the theory of equisingularity of families of germs. In fact, when $F$ is excellent then we can stratify $F$ in such a way that the parameter axes in the source and target are the only 1-dimensional strata (see fig. \ref{fig:Excellent unfolding}).

\begin{definition}\label{def:excellent}
A one-parameter origin-preserving unfolding $F$ is called excellent if it is good and it has a representative as in Definition \ref{good} such that, in addition, $f_t$ has no 0-stable singularities on $W\setminus\{0\}$ (i.e., stable singularities whose isosingular locus is 0-dimensional).
\end{definition}


\begin{figure}[ht]
	\centering
		\includegraphics[width=1.00\textwidth]{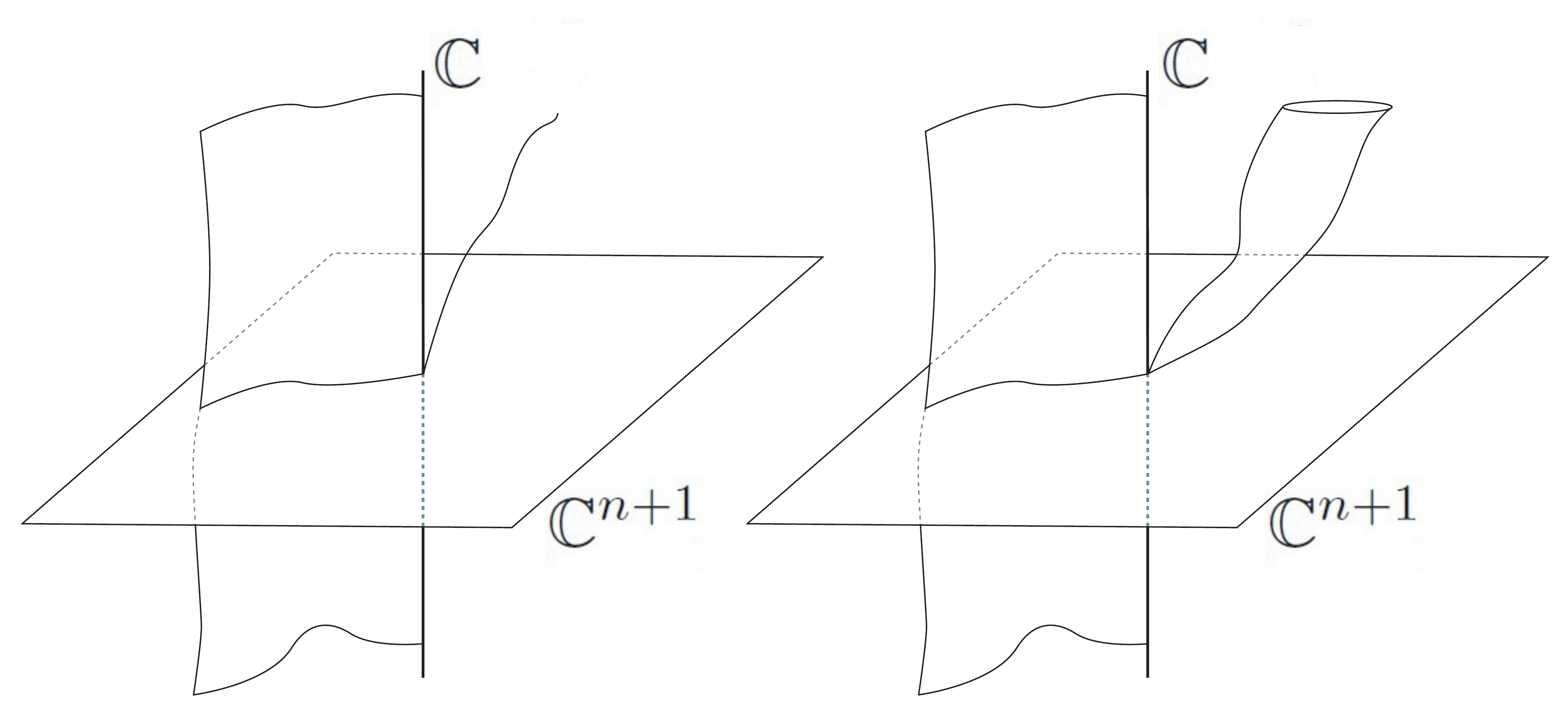}
	\caption{The pictures show the stratifications of the image of a non-excellent unfolding (left),  due to the presence of a 1-dimensional stratum distinct from the parameter axis (bold line), and an excellent unfolding (right).}
%
	\label{fig:Excellent unfolding}
\end{figure}

The above lemma together with the conservation of the image Milnor number and the weak Mond's conjecture allow us to prove Houston's Conjecture on excellent unfoldings (cf. \cite[Conjecture 6.2]{Houston2010}) which we state now.

\begin{theorem}\label{hc}
Let $f:\GS{n}{n+1}$ be $\eqA$-finite of corank $1$ and  let $F(x,t)=(f_t(x),t)$ be an origin-preserving one-parameter unfolding. Consider the family of germs $f_t\colon\GS{n}{n+1}$.
Then $\mu_I(f_t)$ constant implies $F$ excellent.
\end{theorem}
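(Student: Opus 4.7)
The argument proceeds in two parts, both relying on Theorem \ref{conservation}, Theorem \ref{wmc} and Lemma \ref{rama extra}. First, constancy of $\mu_I(f_t)$ forces $F$ to be good in the sense of Definition \ref{good}; second, it precludes the migration of a $0$-stable singularity off the parameter axis. Throughout I assume $f$ is unstable (otherwise the statement is trivial).

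Applying Theorem \ref{conservation} to $F$ as an unfolding of $f$ gives, for small $t$,
\[
\mu_I(f) = \beta_n(X_t) + \mu_I(f_t;0) + \sum_{y\in X_t\setminus\{0\}} \mu_I(f_t;y).
\]
Since $F$ is origin-preserving, $S\subseteq f_t^{-1}(0)$, and Lemma \ref{rama extra} applied iteratively to any extra branches in $f_t^{-1}(0)\setminus S$ yields $\mu_I(f_t;0)\ge \mu_I(f_t)$. The hypothesis $\mu_I(f_t)=\mu_I(f)$ then forces each non-negative term on the right to attain its minimum:
\[
\beta_n(X_t)=0,\quad \mu_I(f_t;y)=0\text{ for every }y\neq 0,\quad \mu_I(f_t;0)=\mu_I(f_t).
\]
By Theorem \ref{wmc}, $\mu_I(f_t;y)=0$ gives stability of $f_t$ at each $y\ne 0$, which is condition (iii) of goodness; $\mu_I(f_t;0)=\mu_I(f_t)$ combined with Lemma \ref{rama extra} yields $f_t^{-1}(0)=S$ (condition (ii)); and finiteness of $F$ is inherited from that of $f$ (condition (i)).

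For the excellence property I would argue by contradiction. Suppose that for arbitrarily small $t$ the multi-germ $f_t$ has an $A_0^{n+1}$-point (the only type of $0$-stable singularity for corank $1$ germs into $\CC^{n+1}$) at some $y(t)\in W\setminus\{0\}$, and that $y(t)\to 0$ as $t\to 0$; isolated $A_0^{n+1}$-points of $f_0$ away from the origin can be removed by shrinking $W$. Let $S'\subseteq S$ be the set of limits, as $t\to 0$, of the $n+1$ preimages in $f_t^{-1}(y(t))$. Each of the corresponding branches of $f_t$ passes both through the origin (by origin-preservation on $S$) and through $y(t)$ (by the migration), so the subimage $X'_t$ of $f_t|_{S'}$ contains a union of $n+1$ hypersurface germs meeting transversely at two distinct common points. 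A direct Mayer--Vietoris computation on this union (whose nerve is the full $n$-simplex, modified so that the top face has two components rather than one) yields $\beta_n(X'_t)\ge 1$: for $n=1$ this is the familiar loop formed by two arcs meeting at two points, and for general $n$ the non-trivial class comes from the fact that all lower intersections are contractible while the $(n+1)$-fold intersection has two components, yielding $H^n\cong \QQ^2/\QQ\cong \QQ$.

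Finally, $X'_t\subseteq X_t$ and the images of the remaining branches meet $X'_t$ in codimension $\ge 1$, so the Mayer--Vietoris sequence for this decomposition gives $\beta_n(X_t)\ge\beta_n(X'_t)\ge 1$, contradicting $\beta_n(X_t)=0$ obtained above. The main obstacle I foresee is the degenerate case $|S'|<n+1$, where preimages of $y(t)$ merge in the limit and individual branches develop self-intersections; the same Mayer--Vietoris principle should apply but requires careful branch-by-branch bookkeeping of the self-intersections to produce the analogous non-trivial top cycle (a ``figure-eight'' class when a single branch acquires a node, for instance).
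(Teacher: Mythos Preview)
Your argument for goodness is correct and essentially identical to the paper's: conservation (Theorem~\ref{conservation}) plus the weak Mond conjecture (Theorem~\ref{wmc}) force stability of $f_t$ away from $0$, and Lemma~\ref{rama extra} forces $f_t^{-1}(0)=S$.

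The excellence step, however, has a genuine gap. Your claim that $A_0^{n+1}$ is ``the only type of $0$-stable singularity for corank~$1$ germs into $\CC^{n+1}$'' is false for $n\ge 2$. Already for $n=2$ there are two $0$-stable types in the target: ordinary triple points \emph{and} cross-caps (Whitney umbrellas); for larger $n$ there are many more, arising from transverse multi-germs built from stable $A_k$ mono-germs. A migrating cross-cap lies on a single branch, so your ``two common points on $n+1$ sheets'' picture does not apply and no obvious $n$-cycle is produced in $X_t$; indeed an image with an isolated cross-cap can perfectly well be contractible. Thus the contradiction with $\beta_n(X_t)=0$ is not available for these types, and the argument as written only rules out migrating $A_0^{n+1}$-points. (Even there, the acknowledged degeneration $|S'|<n+1$ and the claimed injectivity $H_n(X'_t)\hookrightarrow H_n(X_t)$ from Mayer--Vietoris would still need work.)

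The paper avoids this entirely by not attempting a direct topological argument for excellence. After establishing goodness exactly as you do, it observes that $s(f_t)$ is constant (from $f_t^{-1}(0)=S$) and hence that either $s(f_t)\le d(f_t)$ for all $t$, or both $s(f_t)$ and $d(f_t)$ are constant (using that $s>d$ forces $d=n+1$). With goodness and this $s/d$ condition in hand, excellence follows from Houston's \cite[Corollary~5.9]{Houston2010}, which is precisely the machinery designed to control all $0$-stable types via the alternating Milnor numbers $\mu_k^{\Alt}$. Your proposal would become a proof if you replaced the second half by this verification of the $s/d$ condition and the appeal to Houston.
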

\begin{proof}
We will use \cite[Corollary 5.9]{Houston2010}, so we only need to show that $F$ is good and that either 
 $s(\hat f_t)\leq d(\hat f_t)$ for all $t$ or $s(\hat f_t)$ and $d(\hat f_t)$ are both constant, where $\hat f_t$ is the germ at $f_t^{-1}(0)$ (we keep the notation $f_t$ for the germ at $S$).
 
We can suppose that $f$ is not stable, otherwise the result is trivial.
We first prove that $s(\hat f_t)$ is constant, that is, $f_t^{-1}(0)=S$ and hence, $\hat f_t=f_t$. We have $S\subseteq f_t^{-1}(0)$ and if the inclusion was strict, then $\mu_I(f_t)<\mu_I(\hat f_t)$ by Lemma \ref{rama extra}. But the upper semi-continuity of Corollary \ref{upper} implies that $\mu(\hat f_t)\le \mu_I(f)$, in contradiction with the constancy of $\mu_I(f_t)$.

If $s(f_{t_0})>d(f_{t_0})$ for some $t_0$ then this can only happen when $d(f_{t_0})=n+1$. But $s(f_t)$ is constant so $s(f_t)>n+1\geq d(f_t)$, and again we have $d(f_t)=n+1$. This shows that either $s(f_t)\leq d(f_t)$ for all $t$ or $s(f_t)$ and $d(f_t)$ are both constant.

Finally, we use the conservation of the image Milnor number, Theorem \ref{conservation}, to show that $F$ is good. In fact, we get
\[
\mu_I(f_t;0)=\mu_I(f)\ge \sum_{y\in X_t}\mu_I(f_t;y),
\]
so $\mu_I(f_t;y)=0$ for all $y\in X_t\setminus\{0\}$. By the weak Mond's conjecture Theorem \ref{wmc}, $f_t$ is locally stable on $X_t\setminus\{0\}$.
\end{proof}

One can ask if the converse is true, that is, if an excellent unfolding implies constant image Milnor number. We have the following partial result:

\begin{proposition}\label{converse}
Let $f:\GS{n}{n+1}$ be $\eqA$-finite with $n=1,2$ and  let $F(x,t)=(f_t(x),t)$ be an origin-preserving one-parameter unfolding.
Then $F$ excellent implies $\mu_I(f_t)$ constant.
\end{proposition}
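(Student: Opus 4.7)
The plan is to apply the conservation of the image Milnor number (Theorem~\ref{conservation}) and show that, in the low dimensions $n=1,2$, the additional term $\beta_n(X_t)$ appearing in the conservation formula vanishes under the excellence hypothesis, so that the remaining identity forces constancy of $\mu_I(f_t)$.

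First, since $F$ is excellent it is in particular good (Definition~\ref{good}), so for small $t$ we have $f_t^{-1}(0)=S$ and the only unstable point of $f_t$ is the origin. Applying Theorem~\ref{conservation} with $F$ viewed as a one-parameter unfolding of $f=f_0$ therefore gives, for $t$ small,
\[
\mu_I(f) = \beta_n(X_t) + \mu_I(f_t).
\]
The same reasoning at an arbitrary small $t_0$ yields an analogous identity on a neighbourhood of $t_0$, so the constancy of $t\mapsto\mu_I(f_t)$ will follow once I establish that $\beta_n(X_t)=0$ for all small $t$.

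Next, I would prove $\beta_n(X_t)=0$ by showing that $X_t$ is contractible, exploiting the explicit classification of stable types of maps $(\CC^n,S)\to(\CC^{n+1},0)$ available in dimensions $n=1,2$. For $n=1$, the only 0-stable type is the transverse double point; excellence excludes it outside the origin, so $X_t$ is a bouquet of smooth arcs meeting only at $0$, hence contractible. For $n=2$, the 0-stable types are triple points and Whitney umbrellas, both excluded outside the origin by excellence, so the singular locus of $X_t$ outside $0$ consists only of smooth double-point curves. A continuity argument starting from $t=0$ (where every component of the source double-point locus $D^2(f)$ meets $S\times S$, so every component of the image double curve passes through the origin), together with the excellence hypothesis preventing new components of the image double curve from appearing, shows that every double curve of $X_t$ emanates from the origin. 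Choosing $\epsilon$ to be a Milnor radius valid uniformly for small $t$ (by compactness of $S_\epsilon$ and openness of stratified transversality), one concludes that $X_t\cap B_\epsilon$ is homeomorphic to the cone on its link $X_t\cap S_\epsilon$, hence contractible.

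The main obstacle I expect is the $n=2$ case: rigorously ruling out that ``parasitic'' double curves of $X_t$ appear in $B_\epsilon\setminus\{0\}$ as $t$ varies, disjoint from the origin. This requires the excellence hypothesis at the level of the global stratification of $\mathcal X$ (whose only $1$-dimensional strata are the parameter axes), beyond the merely local 0-stable-type exclusion, together with a careful uniform-Milnor-radius argument. For $n\ge 3$, the menagerie of stable types is considerably richer and the cone structure of $X_t$ is no longer automatic from excellence, which presumably accounts for the restriction in the statement.
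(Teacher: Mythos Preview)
Your reduction via Theorem~\ref{conservation} to $\beta_n(X_t)=0$ is correct, and for $n=1$ the contractibility of $X_t$ can be established along your lines with a little care (the paper instead uses the formula $\mu_I(f_t)=\delta(f_t)-s(f_t)+1$ and conservation of the delta invariant, which sidesteps any topological discussion). The real problem is the one you yourself flag in the $n=2$ case, and your proposed resolution does not close it. Compactness of $S_\epsilon$ and openness of stratified transversality only guarantee that the \emph{single} sphere $S_\epsilon$ remains transverse to the stratification of $X_t$ for small $t$; they say nothing about the smaller spheres $S_{\epsilon'}$ with $0<\epsilon'<\epsilon$, and it is transversality to \emph{all} of these that yields the cone structure. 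The Milnor radius is not stable under deformation in general, and obtaining a uniform one is an equisingularity statement essentially as strong as what you are trying to prove. The appeal to the global stratification of $\mathcal X$ does not help either: the double-curve stratum there is $2$-dimensional, whereas excellence constrains only the $1$-dimensional strata.

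The paper's argument for $n=2$ is entirely different and avoids the contractibility of $X_t$. It passes to the source double-point curve $D(f_t)=p_1(D^2(f_t))\subset(\CC^2,S)$, observes that excellence forces $\Sigma(D(f_t))=S$ for all $t$, and hence, by conservation of the Milnor number for plane curves, that $\mu(D(f_t);x)$ is constant at each $x\in S$. It then invokes a theorem of Fern\'andez de Bobadilla and Pe-Pereira \cite{FernandezdeBobadilla2008} to conclude that $F$ is topologically trivial, after which Corollary~\ref{top-triv-2} gives the constancy of $\mu_I(f_t)$. This external theorem is the substantial input your direct approach is missing.
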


\begin{proof} Let $n=1$. We have $\mu_I(f_t)=\delta(f_t)-s(f_t)+1$, where $\delta(f_t)$ is the delta invariant (see, for example, \cite[Lemma 2.2]{Mond1995}). Obviously $s(f_t)=|S|$ is constant and we also have conservation of the delta invariant, which means that
\[
\delta(f)=\sum_{y\in \Sigma(X_t)} \delta(f_t;y),
\]
where $\Sigma(X_t)$ is the singular locus of the image of $f_t$ and  $\delta(f_t;y)$ is the delta invariant of the germ of $f_t$ at $f^{-1}_t(y)$. Since $F$ is excellent, we have $\Sigma(X_t)=\{0\}$ and $f^{-1}_t(0)=S$, so $\delta(f_t)=\delta(f_t;0)$ is also constant.

Let $n=2$. We consider the double point curve in the source $D(f_t)$, defined as $p_1(D^2(f_t))$, where $p_1:\CC^2\times\CC^2\rightarrow\CC^2$ is the projection onto the first component. Then $D(f_t)$ is a family of germs of plane curves in $(\CC^2,S)$. Since $F$ is excellent, we can choose representatives of $D(f_t)$ on some open neighbourhood $U$ of $S$ in $\CC^2$ such that $\Sigma(D(f_t))$ is equal to $S$ for all $t$. This implies that the (usual) Milnor number $\mu(D(f_t);x)$ at each point $x\in S$ must be constant. By a theorem of Fernández de Bobadilla and Pe-Pereira, cf. \cite[Theorem C]{FernandezdeBobadilla2008}, the unfolding $F$ is topologically trivial. So, $\mu_I(f_t)$ is constant by Corollary \ref{top-triv-2}.
\end{proof}

\begin{example}
The family $f_t (x,y) = \left(x,y^2,yp_t (x,y)\right)$ with 
$$ p_t(x,y)=\left(x-\frac{t}{2}\right)^2+\left(y^2-\frac{t}{2}\right)^2-\frac{t^2}{8} $$
yields an excellent unfolding over $\RR$, but not over $\CC$ because $y=0$ and $x=\frac{1}{2}\left(t\pm\frac{i}{2}\sqrt{t^2}\right) $ are curves of non-immersive points of $f_t$. Furthermore its image Milnor number is not constant, $\mu_I(f_0)>\mu_I(f_t)$ for $t\neq 0$ (cf. fig. \ref{fig:corank2counterexample}).

\begin{figure}[htbp]
	\centering
		\includegraphics[width=1.00\textwidth]{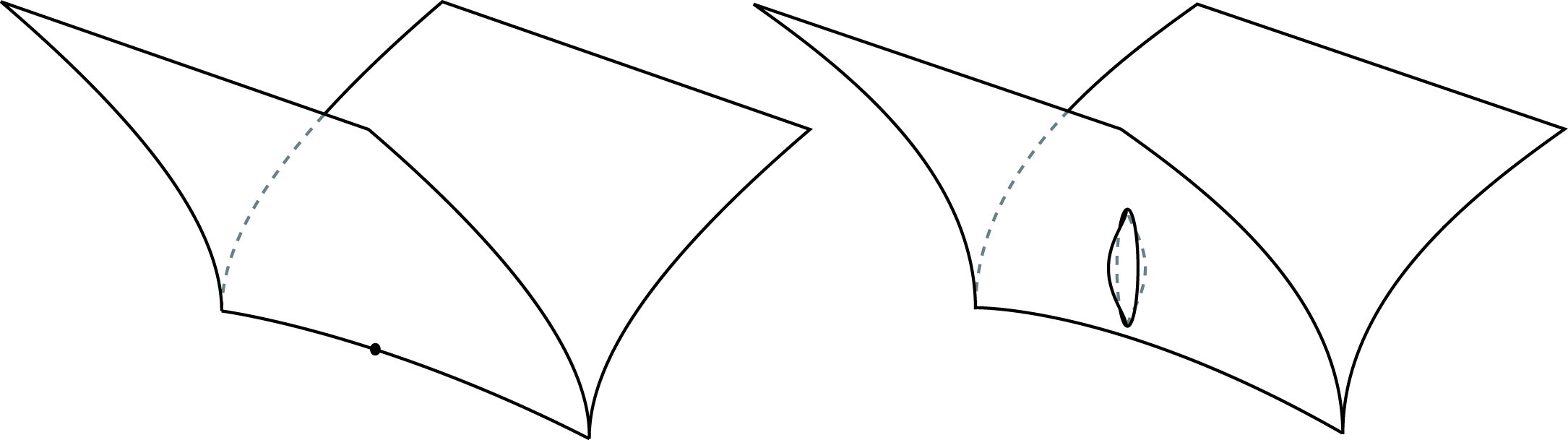}
	\caption{From left to right, $f_0$ and $f_t$ with $t\neq 0$ as real maps.}
	\label{fig:corank2counterexample}
\end{figure}

\end{example}


Theorem \ref{hc} and Proposition \ref{converse} motivate the following more general conjecture, where we consider not only the converse of \ref{hc} in higher dimensions, but also drop the corank 1 condition.

\begin{conjecture}
For every $f:\GS{n}{n+1}$ $\eqA$-finite germ, and every $F(x,t)=(f_t(x),t)$ origin-preserving one-parameter unfolding, $F$ is excellent if and only if $\mu_I(f_t)$ is constant.
\end{conjecture}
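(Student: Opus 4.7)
The conjecture simultaneously generalizes Theorem \ref{hc} to higher corank and Proposition \ref{converse} to higher dimensions; I would attack the two implications separately. For ``$\mu_I(f_t)$ constant implies $F$ excellent'', the natural strategy is to imitate the proof of Theorem \ref{hc}, which rests on three pillars: conservation of $\mu_I$ (Theorem \ref{conservation}), the weak Mond's conjecture (Theorem \ref{wmc}) to rule out unstable singularities of $f_t$ away from the origin, and Lemma \ref{rama extra} to forbid the birth of new branches. Conservation should extend to arbitrary corank with some care, using topologically stable perturbations and Mather's canonical stratification as indicated in the remark following the definition of $\mu_I$. The other two ingredients, however, use corank $1$ essentially: Theorem \ref{wmc} is proved via Houston's image-computing spectral sequence and the Marar-Mond criterion for multiple-point spaces, and Lemma \ref{rama extra} invokes Theorem \ref{wmc} through a smooth generic hyperplane section $\tilde f$. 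A new argument would be required to extend weak Mond to higher corank; one promising angle is the relative Jacobian module $M_y(G)$ from the end of Section \ref{sect:wmc} together with Fern\'andez de Bobadilla's Cohen-Macaulay criterion for Mond's conjecture.

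For the reverse direction ``$F$ excellent implies $\mu_I(f_t)$ constant'' the setup is already clean: since $f_t$ is locally stable on $W \setminus \{0\}$, one has $\mu_I(f_t; y) = 0$ for every $y \neq 0$, and the conservation formula of Theorem \ref{conservation} collapses to
\[ \mu_I(f_0;0) = \beta_n(X_t) + \mu_I(f_t;0), \]
so by upper semi-continuity (Corollary \ref{upper}) it suffices to prove $\beta_n(X_t) = 0$ for all small $t$. The approach of Proposition \ref{converse} was to first produce a topologically trivial representative of $F$ and then invoke Corollary \ref{top-triv-2}, but this uses delta-invariant conservation for $n = 1$ and the Fern\'andez de Bobadilla--Pe-Pereira theorem on the double-point curve $D(f_t)$ for $n = 2$, both of which are dimension-specific. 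For general $n$ I would attempt either a higher-dimensional analogue of that result---showing that constancy of an appropriate invariant of the source double-point space $D(f_t)$ implies topological triviality of $F$---or a direct Thom--Mather isotopy argument exploiting Gaffney's $(a_F)$-type condition for excellent unfoldings, to conclude that the image stratification is topologically trivial and hence $X_t$ is homeomorphic to the contractible cone $X_0$.

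The principal obstacle is the weak Mond's conjecture in higher corank, needed directly in the ``constant implies excellent'' direction: without the corank $1$ combinatorics of multiple-point spaces one loses the main bridge between $\mu_I = 0$ and stability, and this problem is essentially as hard as the full Mond's conjecture, itself open for $n \ge 3$. A secondary difficulty, specific to ``excellent implies constant'' in dimensions $n \ge 3$, is the absence of a topological triviality criterion generalizing Bobadilla--Pe-Pereira, which I expect to be the next natural target after weak Mond is settled.
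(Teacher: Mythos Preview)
The statement under review is a \emph{conjecture}: the paper explicitly leaves it open, motivated by Theorem~\ref{hc} and Proposition~\ref{converse}, and offers no proof. There is therefore nothing in the paper to compare your proposal against; what you have written is not a proof but a research outline, and should be read as such.

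As an outline it is accurate. You correctly isolate the three ingredients of Theorem~\ref{hc} (conservation, the weak Mond's conjecture, and Lemma~\ref{rama extra}) and correctly observe that the last two are the ones tied to corank~$1$ via the Marar--Mond description of multiple-point spaces. Your reduction of the reverse implication to $\beta_n(X_t)=0$ via conservation is also right: excellence forces $\mu_I(f_t;y)=0$ for $y\neq0$, so Theorem~\ref{conservation} gives $\mu_I(f_0)=\beta_n(X_t)+\mu_I(f_t;0)$, and constancy is equivalent to vanishing of $\beta_n(X_t)$. Note, however, that Theorem~\ref{conservation} itself is stated only under the hypothesis that $(n,n+1)$ are nice dimensions or $f$ has corank~$1$, so even this step is not free in full generality; you acknowledge this, and the remark following the definition of $\mu_I$ indeed suggests the right workaround.

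One overstatement: you write that weak Mond in higher corank ``is essentially as hard as the full Mond's conjecture''. This is not clear. Weak Mond asks only that $\mu_I(f)=0$ forces stability, which is strictly weaker than the inequality $\eqA_e\text{-}\codim(f)\le\mu_I(f)$; a priori it could yield to methods (such as the $M_y(G)$ approach you mention, or a direct topological argument on the disentanglement) that do not touch the full inequality. It is fair to say it is the principal obstacle, but not that it is of the same difficulty.
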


\bibliographystyle{plain}
\bibliography{Mybib}

\end{document}